\newcommand{\abs}[1]{\left\lvert#1\right\rvert}
\newtheorem{theorem}{Theorem}[section]
\newtheorem{conjecture}{Conjecture}[section]
\newtheorem{example}{Example}[section]
\newtheorem{remark}{Remark}[section]
\def\smallskip{\addvspace{\smallskipamount}}
\def\medskip{\addvspace{\medskipamount}}
\def\bigskip{\addvspace{\bigskipamount}}
\def\makefootline{\baselineskip=24pt \line{\the\footline}}
\def\pagecontents{\ifvoid\topins\else\unvbox\topins\fi
   \dimen@=\dp255 \unvbox255
   \ifvoid\footins\else
      \vskip\skip\footins \footnoterule \unvbox\footins\fi
     \ifr@ggedbottom \kern-\dimen@ \vfil \fi}
\def\footnoterule{\kern-3pt\hrule width 2truein \kern 2.6pt}
\begin{document}

\title{Computational Complex Dynamics of $\displaystyle{f_{\alpha, \beta, \gamma, \delta}(z)=\frac{\alpha z + \beta}{\gamma z^2 +\delta z}}$}

\author{Sk. Sarif Hassan\\
  \small {\emph{Department of Mathematics}}\\
\small {\emph{College of Engineering Studies}} \\
  \small {\emph{University of Petroleum and Energy Studies}}\\
  \small {\emph{Bidholi, Dehradun, India}}\\
  \small Email: {\texttt{\textcolor[rgb]{0.00,0.07,1.00}{s.hassan@ddn.upes.ac.in}}}\\
}

\maketitle
\begin{abstract}
\noindent The dynamics of the family of maps $\displaystyle{f_{\alpha, \beta, \gamma, \delta}(z)=\frac{\alpha z + \beta}{\gamma z^2 +\delta z}}$ in complex plane is investigated computationally. This dynamical system $z_{n+1}=f_{\alpha, \beta, \gamma, \delta}(z_n)=\frac{\alpha z_n + \beta}{\gamma z_n^2 +\delta z_n}$ has periodic solutions with higher periods which was absent in the real line scenario. It is also found that there are chaotic fractal and non-fractal like solutions of the dynamical systems. A few special cases of parameters are also have been taken care.   \\
\end{abstract}

\begin{flushleft}\footnotesize
{\textbf{Keywords:} Family of maps, Local asymptotic stability, Periodicity, Chaotic and Fractal-like trajectory. \\

{\bf Mathematics Subject Classification: 39A10 \& 39A11}}.
\end{flushleft}

\section{Brief Review and Results}

Consider the family of rational maps $f_{\alpha, \beta, \gamma, \delta}:\mathbb{C} \rightarrow \mathbb{C}$  of four complex parameters $\alpha$, $\beta$, $\gamma$ and $\delta$ as

\begin{equation}
\displaystyle{f_{\alpha, \beta, \gamma, \delta}(z)=\frac{\alpha z + \beta}{\gamma z^2 +\delta z}}
\label{equation:total-equationA}
\end{equation}%

\noindent
The corresponding discrete dynamical system is
\begin{equation}
\displaystyle{z_{n+1}=f_{\alpha, \beta, \gamma, \delta}(z_n)=\frac{\alpha z_n + \beta}{\gamma z_n^2 +\delta z_n}}
\label{equation:total-equationB}
\end{equation}%

\addvspace{\bigskipamount}
\noindent
The dynamics of this family of maps Eq.(\ref{equation:total-equationA}) has been studied in \cite{D-C-N} when the parameters and initial condition are non-negative real numbers. Also there are many works on first, second and third order rational difference equations are done in \cite{S-H}, \cite {C-L} \cite{Ca-Ch-L-Q-1}, \cite{R-A} and \cite{K-L}. The main results obtained in the real line of the dynamical system Eq.(\ref{equation:total-equationB}) are given below:\\

\begin{itemize}
  \item $\maltese$ Let $\alpha<\gamma$ and $\beta>\delta$, then the following happens \dots

  \subitem $\spadesuit$ There are infinitely many solutions, such that for each one of its subsequence $(z_{2n})_n$ and $(z_{2n-1})_n$, converges to zero and other diverges to infinity.\\
  \subitem $\spadesuit$ There exist solutions which \dots

     \subsubitem $\clubsuit$ converges to zero if $\gamma+\delta>\alpha+\beta$;
    \subsubitem $\clubsuit$ diverges to infinity if $\gamma+\delta<\alpha+\beta$;
    \subsubitem $\clubsuit$ are constant if $\gamma+\delta=\alpha+\beta$;

  \item $\maltese$ Let $\alpha=\gamma$ and $\beta>\delta$ then for each positive solution $(z_n)_n$, one of the subsequences $(z_{2n})_n$ and $(z_{2n-1})_n$ diverges to infinity and other to a positive number that can be arbitrarily large depending on initial values. Further there, are positive initial values for which the corresponding solution increases monotonically to infinity.
  \item $\maltese$ Let $\alpha<\gamma$ and $\beta=\delta$ then for each positive solution $(z_n)_n$, one of the subsequences $(z_{2n})_n$ and $(z_{2n-1})_n$ diverges to zero and other to a nonnegative number. Further there, are positive initial values for which the corresponding solution decreases monotonically to zero.
\end{itemize}

\noindent
Let us make a note that in the real line set-up, there exists neither any periodic nor chaotic solutions of the dynamical system of the family of maps. We shall now see if such kind of solutions exist of the same dynamical system Eq.(\ref{equation:total-equationB}) in the complex plane.\\\\
\noindent
Here our main aim to investigate computationally the dynamics of the family of maps (\ref{equation:total-equationA}) under the assumption that the parameters and initial condition are arbitrary complex numbers. Similar works on rational maps are made in \cite{S-E} and \cite{S-S1}.\\

\noindent
The key dynamics what we have achieved in the complex plane are summarized here \dots

\begin{itemize}
  \item There are three fixed points of the dynamical systems (\ref{equation:total-equationB}) and there are certain parameters $\alpha, \beta, \gamma$ and $\delta$ (examples are given) such that those fixed points are stable (sink) and unstable (source).
  \item The dynamical system (\ref{equation:total-equationB}) possess higher order periodic solutions too (few examples are given).
  \item The dynamical system (\ref{equation:total-equationB}) has chaotic (fractal like and fractal-unlike) solutions.
\end{itemize}

\noindent
In the following sections, a detail dynamics of the dynamical system (\ref{equation:total-equationB}) is characterized.

\section{Local Asymptotic Stability of the Fixed Points}

The fixed points of the family of maps (\ref{equation:total-equationA}) are the solutions of the cubic equation
\[
\bar{z}=\frac{\alpha \bar{z}+\beta}{\gamma \bar{z}^2+\delta \bar{z}}
\]
The map (\ref{equation:total-equationA}) has the three fixed points $\bar{z}_{1,2,3}$ \dots \\
\small

\noindent
{$ -\frac{\delta }{3 \gamma }-\frac{2^{1/3} \left(-3 \alpha  \gamma -\delta ^2\right)}{3 \gamma  \left(27 \beta  \gamma ^2-9 \alpha  \gamma  \delta -2 \delta ^3+\sqrt{4 \left(-3 \alpha  \gamma -\delta ^2\right)^3+\left(27 \beta  \gamma ^2-9 \alpha  \gamma  \delta -2 \delta ^3\right)^2}\right)^{1/3}}+\frac{\left(27 \beta  \gamma ^2-9 \alpha  \gamma  \delta -2 \delta ^3+\sqrt{4 \left(-3 \alpha  \gamma -\delta ^2\right)^3+\left(27 \beta  \gamma ^2-9 \alpha  \gamma  \delta -2 \delta ^3\right)^2}\right)^{1/3}}{3\ 2^{1/3} \gamma }$, \\ $-\frac{\delta }{3 \gamma }+\frac{\left(1+i \sqrt{3}\right) \left(-3 \alpha  \gamma -\delta ^2\right)}{3\ 2^{2/3} \gamma  \left(27 \beta  \gamma ^2-9 \alpha  \gamma  \delta -2 \delta ^3+\sqrt{4 \left(-3 \alpha  \gamma -\delta ^2\right)^3+\left(27 \beta  \gamma ^2-9 \alpha  \gamma  \delta -2 \delta ^3\right)^2}\right)^{1/3}}-\frac{\left(1-i \sqrt{3}\right) \left(27 \beta  \gamma ^2-9 \alpha  \gamma  \delta -2 \delta ^3+\sqrt{4 \left(-3 \alpha  \gamma -\delta ^2\right)^3+\left(27 \beta  \gamma ^2-9 \alpha  \gamma  \delta -2 \delta ^3\right)^2}\right)^{1/3}}{6\ 2^{1/3} \gamma }$, $-\frac{\delta }{3 \gamma }+\frac{\left(1-i \sqrt{3}\right) \left(-3 \alpha  \gamma -\delta ^2\right)}{3\ 2^{2/3} \gamma  \left(27 \beta  \gamma ^2-9 \alpha  \gamma  \delta -2 \delta ^3+\sqrt{4 \left(-3 \alpha  \gamma -\delta ^2\right)^3+\left(27 \beta  \gamma ^2-9 \alpha  \gamma  \delta -2 \delta ^3\right)^2}\right)^{1/3}}-\frac{\left(1+i \sqrt{3}\right) \left(27 \beta  \gamma ^2-9 \alpha  \gamma  \delta -2 \delta ^3+\sqrt{4 \left(-3 \alpha  \gamma -\delta ^2\right)^3+\left(27 \beta  \gamma ^2-9 \alpha  \gamma  \delta -2 \delta ^3\right)^2}\right)^{1/3}}{6\ 2^{1/3} \gamma }$ respectively.\\

\noindent
The linearized equation of dynamical system Eq.(\ref{equation:total-equationB}) with respect to the a fixed point $\bar{z_{i}}$ for $i=1,2$ and $3$ is

\begin{equation}
\label{equation:linearized-equation}
\displaystyle{
z_{n+1}=f'_{\alpha, \beta, \gamma, \delta}(z_{n}), n=0,1,\ldots
}
\end{equation}

\begin{theorem}
The fixed points $\bar{z_{i}}$ for $i=1,2$ and $3$ of the Eq.(\ref{equation:total-equationB}) is \dots \\
\emph{locally asymptotically stable} if $$\abs{f'_{\alpha, \beta, \gamma, \delta}(\bar{z_{i}})}<1$$
\emph{unstable} if $$\abs{f'_{\alpha, \beta, \gamma, \delta}(\bar{z_{i}})}>1$$
\emph{non-hyperbolic} if $$\abs{f'_{\alpha, \beta, \gamma, \delta}(\bar{z_{i}})}=1$$
\end{theorem}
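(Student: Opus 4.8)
The plan is to treat this as the classical linearization criterion for a fixed point of the iteration $z_{n+1}=f(z_n)$, where throughout I abbreviate $f=f_{\alpha,\beta,\gamma,\delta}$. First I would record that each $\bar z_i$ is a genuine fixed point lying away from the poles $\{0,-\delta/\gamma\}$ of $f$, so that $f$ is holomorphic on some neighbourhood of $\bar z_i$ and the derivative $\lambda := f'(\bar z_i)$ is well defined; by the quotient rule and the fixed-point relation this can be written as $f'(\bar z_i)=\dfrac{\alpha(\gamma\bar z_i^{2}+\delta\bar z_i)-(\alpha\bar z_i+\beta)(2\gamma\bar z_i+\delta)}{(\gamma\bar z_i^{2}+\delta\bar z_i)^{2}}$, which is the quantity actually evaluated in the examples. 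Then I would substitute $z_n=\bar z_i+w_n$ and Taylor-expand about $\bar z_i$: $w_{n+1}=f(\bar z_i+w_n)-\bar z_i=\lambda w_n+R(w_n)$, where the remainder obeys $\abs{R(w)}=o(\abs{w})$ as $w\to 0$; concretely, for every $\varepsilon>0$ there is $\rho>0$ such that $\abs{R(w)}\le\varepsilon\abs{w}$ whenever $\abs{w}\le\rho$.

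\textbf{Stable case.} Suppose $\abs{\lambda}<1$. Choose $\varepsilon>0$ so small that $q:=\abs{\lambda}+\varepsilon<1$, and take the corresponding $\rho$. For $\abs{w_n}\le\rho$ one gets $\abs{w_{n+1}}\le(\abs{\lambda}+\varepsilon)\abs{w_n}=q\abs{w_n}\le\rho$, so by induction the whole orbit remains in the closed disk of radius $\rho$ about $\bar z_i$ and $\abs{w_n}\le q^{\,n}\abs{w_0}\to0$. Hence every orbit that starts in $\{\abs{z-\bar z_i}\le\rho\}$ converges to $\bar z_i$, which is precisely local asymptotic stability (a sink).

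\textbf{Unstable case.} Suppose $\abs{\lambda}>1$. Now pick $\varepsilon>0$ with $\abs{\lambda}-\varepsilon>1$ and the corresponding $\rho$; then for $0<\abs{w_n}\le\rho$ we have $\abs{w_{n+1}}\ge(\abs{\lambda}-\varepsilon)\abs{w_n}>\abs{w_n}$. So the moduli $\abs{w_n}$ strictly increase, by at least the factor $\abs{\lambda}-\varepsilon>1$ at each step, until the orbit leaves the disk of radius $\rho$; thus no orbit starting arbitrarily close to (but distinct from) $\bar z_i$ can stay near it, and $\bar z_i$ is unstable (a source). The third alternative, $\abs{\lambda}=1$, is simply the definition of a non-hyperbolic fixed point — the linear part is a rotation and no conclusion about stability follows from $\lambda$ alone — so nothing is to be proved there beyond naming it.

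The only genuine wrinkle, rather than a true obstacle, is securing the remainder estimate uniformly on a fixed small disk. Since $f$ is rational and holomorphic near each $\bar z_i$, I would get it either from the convergent power series $f(\bar z_i+w)=\bar z_i+\lambda w+\tfrac12 f''(\bar z_i)w^{2}+\cdots$ by bounding the tail on a closed disk contained in the domain of holomorphy, or from the mean-value-type bound $\abs{f(\bar z_i+w)-\bar z_i-\lambda w}\le\abs{w}\sup_{\abs{t}\le\rho}\abs{f'(\bar z_i+t)-\lambda}$ together with continuity of $f'$ at $\bar z_i$. Either route yields the required $o(\abs{w})$ control, and once that is in hand the rest is the elementary geometric-contraction/expansion argument above.
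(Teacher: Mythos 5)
Your proposal is correct. Note, however, that the paper offers no proof of this theorem at all: it is stated as a bare criterion (the classical linearization/hyperbolicity test for fixed points of a one-dimensional map) and the author immediately proceeds to numerical remarks and examples. So there is no "paper's approach" to compare against; what you have written is the standard and complete justification that the paper implicitly takes for granted. Your Taylor-expansion argument with the uniform $o(\abs{w})$ remainder bound, the geometric contraction estimate $\abs{w_n}\le q^{\,n}\abs{w_0}$ in the sink case, and the expansion-until-escape argument in the source case is exactly the right proof, and your observation that the case $\abs{f'_{\alpha,\beta,\gamma,\delta}(\bar z_i)}=1$ is a definition rather than an assertion is also correct. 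The only point worth making explicit rather than merely "recording" is why $\bar z_i$ avoids the poles $0$ and $-\delta/\gamma$: clearing denominators in the fixed-point equation gives the cubic $\gamma\bar z^{3}+\delta\bar z^{2}-\alpha\bar z-\beta=0$, and if $\gamma\bar z_i^{2}+\delta\bar z_i=0$ held then necessarily $\alpha\bar z_i+\beta=0$ as well, which is a nongeneric coincidence of parameters; excluding that degenerate case, $f$ is indeed holomorphic near each $\bar z_i$ and your argument goes through verbatim.
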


\begin{remark}
It is observed that the minimum value of the $\abs{f'_{\alpha, \beta, \gamma, \delta}(\bar{z_{1}})}$ is $6.57147 \times 10^{-9}$ which is less than $1$ when the parameters are taken as $\alpha=0.816885-0.0738146i, \beta=-0.0245487+0.0678909i,\gamma=-0.360452-0.788031i$ and $\delta=1.89324+0.104191i$.\\
\noindent
This numerical observation makes a guarantee that there is parameters $\alpha$, $\beta$, $\gamma$ and $\delta$ such that $\abs{f'_{\alpha, \beta, \gamma, \delta}(\bar{z_{1}})}<1$ holds good. Therefore existence of parameters is ensured for local asymptotic stability (sink) of the fixed point $\bar{z_{1}}$ of the dynamical system Eq.(\ref{equation:total-equationB}).\\
\end{remark}

\begin{example}
Let the parameters be $\alpha=0.92735+0.9174938i,\beta=0.713574+0.618337i,\gamma=0.343287+0.9360273i$ and $\delta=0.124774+0.7305853i$, then the fixed points of the dynamical system Eq.(\ref{equation:total-equationB}) are $\{\bar{z}_1=-0.708428+0.171918 i\},\{ \bar{z}_2=1.1079 -0.305049 i\}$ and $\{\bar{z}_3 =-1.13054-0.00168788 i\}$. \\ It is noted that the $\abs{\alpha}=1.3045$, $\abs{\beta}=0.9442$, $\abs{\gamma}=0.9970$, $\abs{\delta}=0.7412$, $\abs{\alpha+\beta}=2.2475$ and $\abs{\gamma+\delta}=1.7311$ and hence $\abs{\alpha}>\abs{\gamma}$, $\abs{\beta}>\abs{\delta}$, $\abs{\alpha+\beta}>\abs{\gamma+\delta}$. Here $\abs{f'_{\alpha, \beta, \gamma, \delta}(\bar{z_{1}})}$ is $3.51012$ which is greater than $1$. Hence the fixed point is \emph{unstable (source)}.\\

\end{example}

\noindent
The trajectory plots of $10000$ and $40000$ iterations with an initial value have been figures in Fig. $1$. \\

\begin{figure}[H]
      \centering

      \resizebox{12cm}{!}
      {
      \begin{tabular}{c c}
      \includegraphics [scale=6]{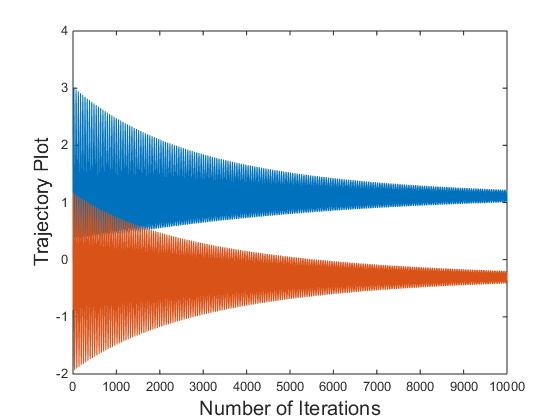}
      \includegraphics [scale=6]{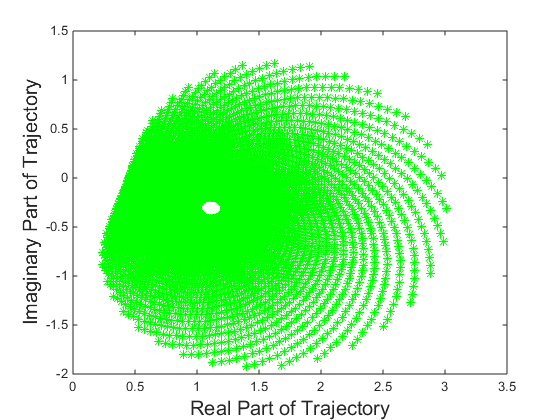}\\
      \includegraphics [scale=6]{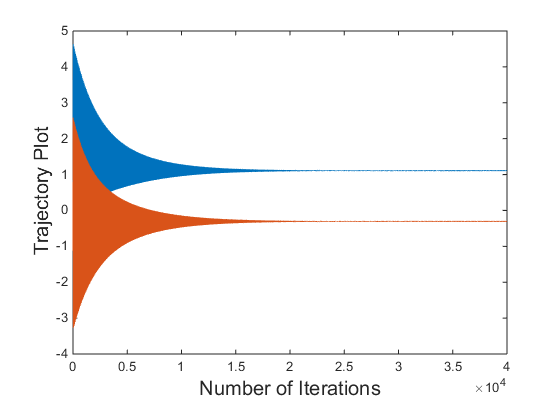}
      \includegraphics [scale=6]{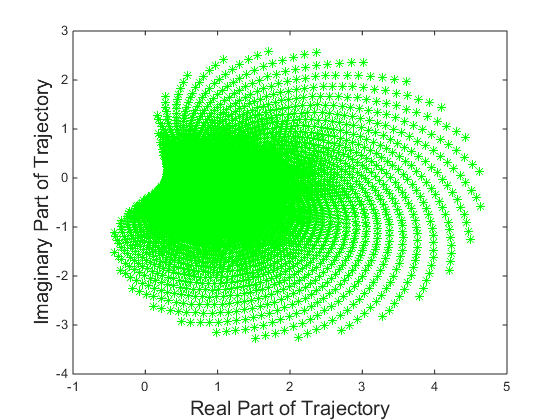}\\
            \end{tabular}
      }
\caption{Trajectory Plots.}
      \begin{center}

      \end{center}
      \end{figure}

\noindent
It is seen in the Fig. 1 that the trajectory plots are very unstable in nature while it is converging to the fixed point $-0.708428+0.171918i$.

\begin{remark}
It is observed that the maximum value of the $\abs{f'_{\alpha, \beta, \gamma, \delta}(\bar{z_{2}})}$ is $2.70948 \times 10^{11}$ which is much greater than $1$ when the parameters are taken as $\alpha=-0.0000775242-0.0000567123i, \beta=-1.66869+0.581375i,\gamma=1.33603 \times 10^{-6}-6.20472 \times 10^{-7}i$ and $\delta=-5.94147-1.73645i$. \\
\noindent
This numerical observation confirms that there is parameters $\alpha$, $\beta$, $\gamma$ and $\delta$ such that $\abs{f'_{\alpha, \beta, \gamma, \delta}(\bar{z_{2}})}>1$ holds good. Therefore existence of parameters is ensured for unstable (source) solution at the fixed point $\bar{z_{2}}$ of the dynamical system Eq.(\ref{equation:total-equationB}).\\\\
\end{remark}

\begin{example}
Consider $\alpha \to 0.27481+0.24150174i,\beta \to 0.243145+0.154159i,\gamma \to 0.956416+0.935661i,\delta \to 0.818714+0.728261i$ then the fixed points are $\{\bar{z_{1}} \to 0.515402 -0.0307232 i\},\{ \bar{z_{2}}\to -0.484732+0.0782783 i\},\{\bar{z_{3}} \to -0.848703-0.00872243 i\}$.\\ It is noted that the $\abs{\alpha}=0.3658$, $\abs{\beta}=0.2879$, $\abs{\gamma}=1.3380$, $\abs{\delta}=1.0957$, $\abs{\alpha+\beta}=0.6518$ and $\abs{\gamma+\delta}=2.4330$ and hence $\abs{\alpha}<\abs{\gamma}$, $\abs{\beta}<\abs{\delta}$, $\abs{\alpha+\beta}<\abs{\gamma+\delta}$. Here $\abs{f'_{\alpha, \beta, \gamma, \delta}(\bar{z_{1}})}$ is $0.991066$ which is lesser than $1$. Hence the fixed point is \emph{locally asymptotically stable (sink)}.
\end{example}

\begin{figure}[H]
      \centering

      \resizebox{15cm}{!}
      {
      \begin{tabular}{c c}
      \includegraphics [scale=6]{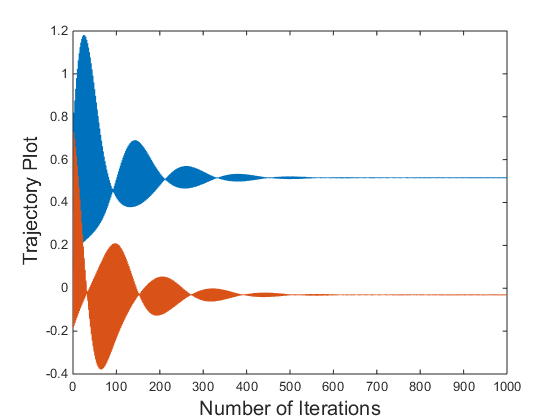}
      \includegraphics [scale=6]{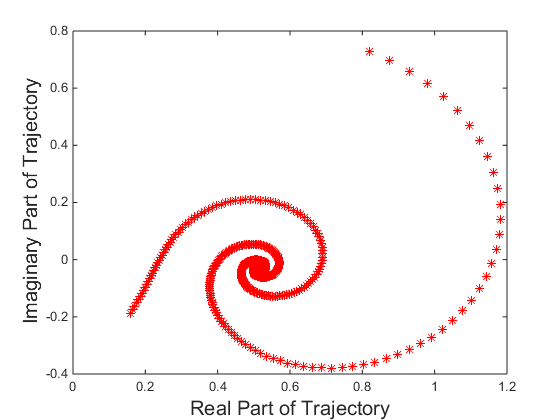}\\

            \end{tabular}
      }
\caption{Trajectory Plots.}
      \begin{center}

      \end{center}
      \end{figure}

\noindent
In Fig.$2$, the trajectory plot for an initial value of $1000$ iterations is given. it is seen in the Fig.$2$ that the trajectory plot is very stable in nature while it is converging to the fixed point $0.515402 -0.0307232i$.\\

\noindent

\section{Verification of Different Criteria}
In this section, we shall explore the convergence, divergence and constant solution conditions for the dynamical system Eq.(\ref{equation:total-equationB}) which are mentioned in the section $1$. \\

\subsection{Verification of Convergence Criteria}
Consider the parameters of the dynamical system Eq.(\ref{equation:total-equationB}) $\alpha=0.27481 + 0.241501i$ ($\abs{\alpha}=0.3658$), $\beta=1.2431 + 0.1542i$ ($\abs{\beta}=1.2526$), $\gamma=0.956416 + 0.935661i$ ($\abs{\gamma}=1.3380$) and $\delta=0.818714 + 0.728261i$ ($\abs{\delta}=1.0957$), $\abs{\alpha+\beta}=1.5686$ and $\abs{\gamma+\delta}=2.4330$. \\ Here $\abs{\alpha}<\abs{\gamma}$, $\abs{\beta}<\abs{\delta}$, $\abs{\alpha+\beta}<\abs{\gamma+\delta}$ which are the necessary conditions to have convergent solution which converge to zero as stated in the section $1$. But as it can be seen that the solution in this case in the complex plane is unbounded. The trajectory plots for $20$ different initial values are given in Fig.$3$.

\begin{figure}[H]
      \centering

      \resizebox{6cm}{!}
      {
      \begin{tabular}{c}
      \includegraphics [scale=3]{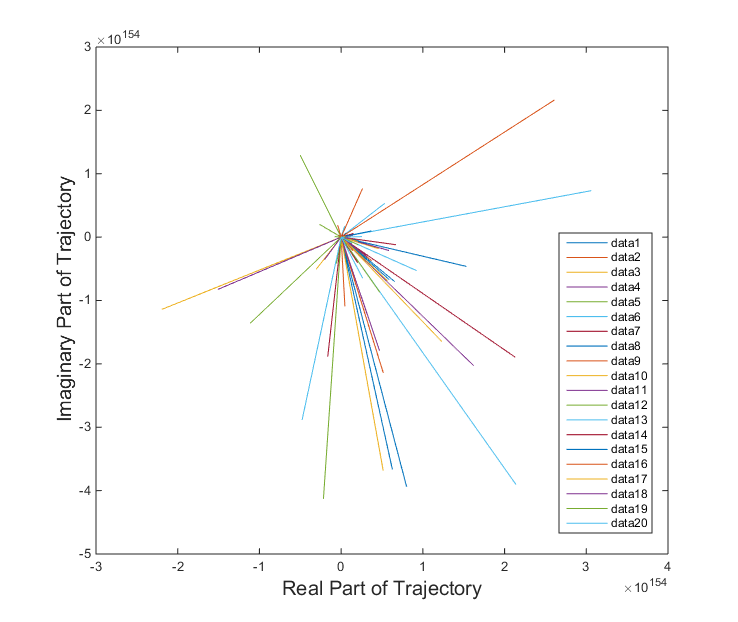}
            \end{tabular}
      }
\caption{Trajectory Plot of Unbounded Solution.}
      \begin{center}

      \end{center}
      \end{figure}

\noindent
Here none of the twenty trajectories is convergent and converges to zero, all are unbounded indeed. This does not nullify the result which was in the real line but it encounters trajectories on contrary in the complex plane.

\subsection{Verification of Divergence Criteria}
Consider the parameters of the dynamical system Eq.(\ref{equation:total-equationB}) $\alpha=0.917193 + 0.2858390i$ ($\abs{\alpha}=0.9607$), $\beta=1.13764 + 1.32155i$ ($\abs{\beta}=1.7438$), $\gamma=0.993047 + 0.33978i$ ($\abs{\gamma}=1.0496$) and $\delta=0.7572 + 0.753729i$ ($\abs{\delta}=1.0684$), $\abs{\alpha+\beta}=2.6088$ and $\abs{\gamma+\delta}=2.063$. \\ Here it is found that $\abs{\alpha}<\abs{\gamma}$, $\abs{\beta}<\abs{\delta}$, $\abs{\alpha+\beta}>\abs{\gamma+\delta}$ which are the necessary conditions to have a divergent solution and which diverges to infinity as stated in the section $1$. Here as it can be seen that the solution in the complex plane is converging to infinity. The trajectory plots for $50$ different initial values are given in Fig.$4$.

\begin{figure}[H]
      \centering

      \resizebox{6cm}{!}
      {
      \begin{tabular}{c}
      \includegraphics [scale=3]{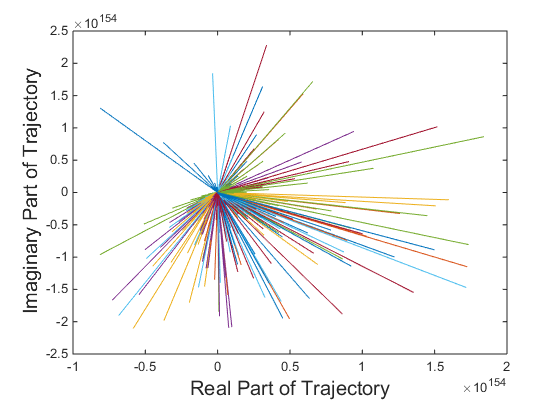}
            \end{tabular}
      }
\caption{Trajectory Plot of Unbounded Solution.}
      \begin{center}

      \end{center}
      \end{figure}

\noindent
Here all the fifty trajectories is divergent and diverges to infinity. This does justify the truth of result in the complex plane.

\subsection{Verification of Constant Criteria}
Consider the parameters of the dynamical system Eq.(\ref{equation:total-equationB}) $\alpha=0.9172 + 0.2858i$ ($\abs{\alpha}=0.9607$), $\beta=1.1376 + 1.3216i$ ($\abs{\beta}=1.7438$), $\gamma=3.1376 + 1.3216i$ ($\abs{\gamma}=3.4046$) and $\delta=-1.0828 + 0.2858i$ ($\abs{\delta}=1.1199$), $\abs{\alpha+\beta}=\abs{\gamma+\delta}=2.6088$. \\ Here $\abs{\alpha}<\abs{\gamma}$, $\abs{\beta}<\abs{\delta}$, $\abs{\alpha+\beta}=\abs{\gamma+\delta}$ which are the necessary conditions to have a constant solution as stated in the section $1$. \\ But here it is seen that the solution in this case in the complex plane is divergent and diverges to infinity. The trajectory plots for $50$ different initial values are given in Fig.$5$. \\

\begin{figure}[H]
      \centering

      \resizebox{6cm}{!}
      {
      \begin{tabular}{c}
      \includegraphics [scale=3]{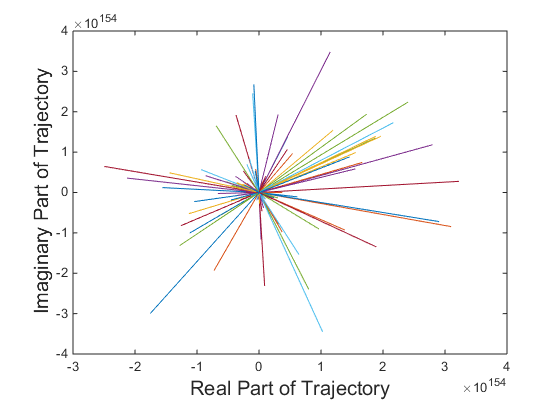}
            \end{tabular}
      }
\caption{Trajectory Plot of Unbounded Solution.}
      \begin{center}

      \end{center}
      \end{figure}

\noindent
Here none of the fifty trajectories is constant. This does not nullify the result in the complex plane which was in real case but it encounters trajectories on its contrary.

\section{Periodic of Solutions}

A solution $\{z_n\}_n$ of a dynamical system is said to be \emph{globally periodic} of period $t$ if $z_{n+t}=z_n$ for any given initial conditions. A solution $\{z_n\}_n$ is said to be \emph{periodic with prime period} $p$ if p is the smallest positive integer having this property.\\

\noindent
As mentioned already that in the real line, there is no mention of periodic solutions of the dynamical system Eq.(\ref{equation:total-equationB}). Here a computational attempt has been made in the complex plane scenario to search for the periodic solutions if it has any. \\

\begin{theorem}
There does not exists any period two solutions (non-trivial) of the dynamical system Eq.(\ref{equation:total-equationB}).
\end{theorem}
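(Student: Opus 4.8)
The plan is to turn the existence of a period-two orbit into a polynomial equation and then show that the only solutions of that equation are the fixed points already found in Section 2. Write $f=f_{\alpha,\beta,\gamma,\delta}$ in the form $f(z)=\dfrac{\alpha z+\beta}{z(\gamma z+\delta)}$. A non-trivial period-two solution is a point $z_{0}\in\mathbb{C}$ admissible for the iteration (so $z_{0}\neq 0$, $\gamma z_{0}+\delta\neq 0$, and likewise for $w_{0}:=f(z_{0})$) with $f(f(z_{0}))=z_{0}$ but $f(z_{0})\neq z_{0}$. The condition $f(w)=z$ reads $\alpha w+\beta=z\left(\gamma w^{2}+\delta w\right)$; substituting $w=f(z)$ and multiplying through by $\left[z(\gamma z+\delta)\right]^{2}$ turns this into a polynomial identity in $z$ of degree four.

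First I would carry out that clearing of denominators. A factor $z$ comes out of the resulting quartic; this branch $z=0$ corresponds to the cycle $0\mapsto\infty\mapsto 0$ on the Riemann sphere, which is not an orbit of the difference equation in $\mathbb{C}$, and is discarded. What remains is the cubic
\[
Q(z)=\alpha(\alpha z+\beta)(\gamma z+\delta)+\beta z(\gamma z+\delta)^{2}-\gamma(\alpha z+\beta)^{2}-\delta z(\alpha z+\beta)(\gamma z+\delta)=0 .
\]
The decisive step is the algebraic observation — obtained by expanding $Q$ and comparing coefficients term by term — that
\[
Q(z)=(\beta\gamma-\alpha\delta)\bigl(\gamma z^{3}+\delta z^{2}-\alpha z-\beta\bigr),
\]
where the cubic in parentheses is exactly the one whose roots are the three fixed points $\bar z_{1},\bar z_{2},\bar z_{3}$. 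Consequently, provided $\beta\gamma-\alpha\delta\neq 0$ and $\gamma\neq0$, $\beta\neq0$ (so that $f$ genuinely has the stated form with three finite fixed points), every solution of $f(f(z_{0}))=z_{0}$ is one of $\bar z_{1},\bar z_{2},\bar z_{3}$, and therefore satisfies $f(z_{0})=z_{0}$; no genuine two-cycle exists.

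The only real computation, and the main point needing care, is the coefficient comparison giving $Q(z)=(\beta\gamma-\alpha\delta)(\gamma z^{3}+\delta z^{2}-\alpha z-\beta)$: it is elementary but must be checked on all four coefficients, and the pleasant feature is that the proportionality constant turns out to be precisely $\beta\gamma-\alpha\delta$. A secondary bookkeeping point deserves a line as well: that discarding the factor $z$ is legitimate (the sphere-cycle $0\leftrightarrow\infty$ is not realized in $\mathbb{C}$), and that none of the roots of $Q$ is an inadmissible point $0$ or $-\delta/\gamma$, so that ``root of $Q$'' really does mean ``fixed point of $f$''.

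Finally I would record why the hypotheses cannot be dropped, since the vanishing of $\beta\gamma-\alpha\delta$ is exactly what the argument isolates: when $\beta\gamma=\alpha\delta$ (with $\alpha\gamma\neq0$) the map collapses to the involution $z\mapsto\alpha/(\gamma z)$, for which every admissible point has period one or two; and when $\beta=0$ or $\gamma=0$ the map reduces to a M\"obius transformation. These are precisely the degenerate parameter regimes treated separately as special cases, so the theorem is stated and proved for the generic family, where the reduction $Q(z)=(\beta\gamma-\alpha\delta)C(z)$ forces period-two points to coincide with fixed points.
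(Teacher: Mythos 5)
Your proposal is correct, and it takes a genuinely different (and substantially more complete) route than the paper. The paper's proof merely writes down the pair of relations $\phi=f(\psi)$, $\psi=f(\phi)$ (with what appears to be a transcription slip in the denominators, and with the system mislabelled as ``linear'') and then asserts that ``it is trivial to observe'' that the only solutions are fixed points; no computation is given. You instead form $f(f(z))=z$ directly, clear denominators, discard the spurious factor $z$ (the $0\leftrightarrow\infty$ sphere cycle), and exhibit the exact factorization
\[
Q(z)=(\beta\gamma-\alpha\delta)\left(\gamma z^{3}+\delta z^{2}-\alpha z-\beta\right),
\]
where the second factor is precisely the fixed-point cubic. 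I checked the coefficient comparison: the $z^{3},z^{2},z,1$ coefficients of $Q$ are $\gamma(\beta\gamma-\alpha\delta)$, $\delta(\beta\gamma-\alpha\delta)$, $-\alpha(\beta\gamma-\alpha\delta)$, $-\beta(\beta\gamma-\alpha\delta)$ respectively, so the identity holds. Your admissibility check also goes through: $-\delta/\gamma$ is a root of the fixed-point cubic only when $\alpha\delta=\beta\gamma$, and $0$ is a root only when $\beta=0$, both excluded by your hypotheses. What your approach buys, beyond an actual proof, is the identification of the exact degeneracy locus $\beta\gamma=\alpha\delta$ on which the theorem \emph{fails}; this is not an idle caveat, since the paper's own Sections 7.1 and 7.3 (where $\alpha=\beta,\gamma=\delta$ and $\gamma=\alpha,\delta=\beta$, so $\beta\gamma-\alpha\delta=0$ in both) exhibit genuine period-two orbits, contradicting the theorem as literally stated. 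Your version, with the explicit hypothesis $\beta\gamma-\alpha\delta\neq0$ (and $\beta,\gamma\neq0$), is the statement that is actually true.
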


\begin{proof}
Suppose $\phi$ an $\psi$ are two period 2-cycle solution of the dynamical system Eq.(\ref{equation:total-equationB}). Therefore by definition of the period point, $\phi$ and $\psi$ satisfy two equations $$\phi=\frac{\beta +\alpha  \psi}{\delta  \psi+\gamma  \phi^2}, \psi=\frac{\beta +\alpha  \phi}{\delta  \phi+\gamma  \psi^2}$$
It i trivial to observe that except the fixed points of the map $f_{\alpha, \beta, \gamma, \delta}(z)=\frac{\alpha z + \beta}{\gamma z^2 +\delta z}$, there is no other solution of these system of two linear equations. \\ Hence there does not exists any non-trivial period $2$-cycle solution of the Eq.(\ref{equation:total-equationB}).
\end{proof}

\subsection{Example Cases of Periodic Solutions of Different Periods}
\textbf{Example 4.1:}
Consider the parameters $\alpha=0.655098003973841 + 0.162611735194631i$ ($\abs{\alpha}=0.674978$), $\beta=0.118997681558377 + 0.498364051982143i$ ($\abs{\beta}=0.512374$), $\gamma=\beta$ and $\delta=\alpha$, $\abs{\alpha+\beta}=\abs{\gamma+\delta}=1.0179$. Here $\abs{\alpha}>\abs{\gamma}$, $\abs{\beta}<\abs{\delta}$, $\abs{\alpha+\beta}=\abs{\gamma+\delta}$. \\ This set up of parameters produce a periodic solution of period $5$. One of the periodic 5-cycles is $\{0.14044571-0.62692799i, 0.02078534+0.517795i, 2.994135-1.828805i, 0.27873451-0.0515247i, 1.8686505+1.9830857i$ and $0.14044571-0.62692799i\}$. The trajectory plots for five different initial values are given in the Fig.$6$.

\begin{figure}[H]
      \centering

      \resizebox{12cm}{!}
      {
      \begin{tabular}{c c}
      \includegraphics [scale=5]{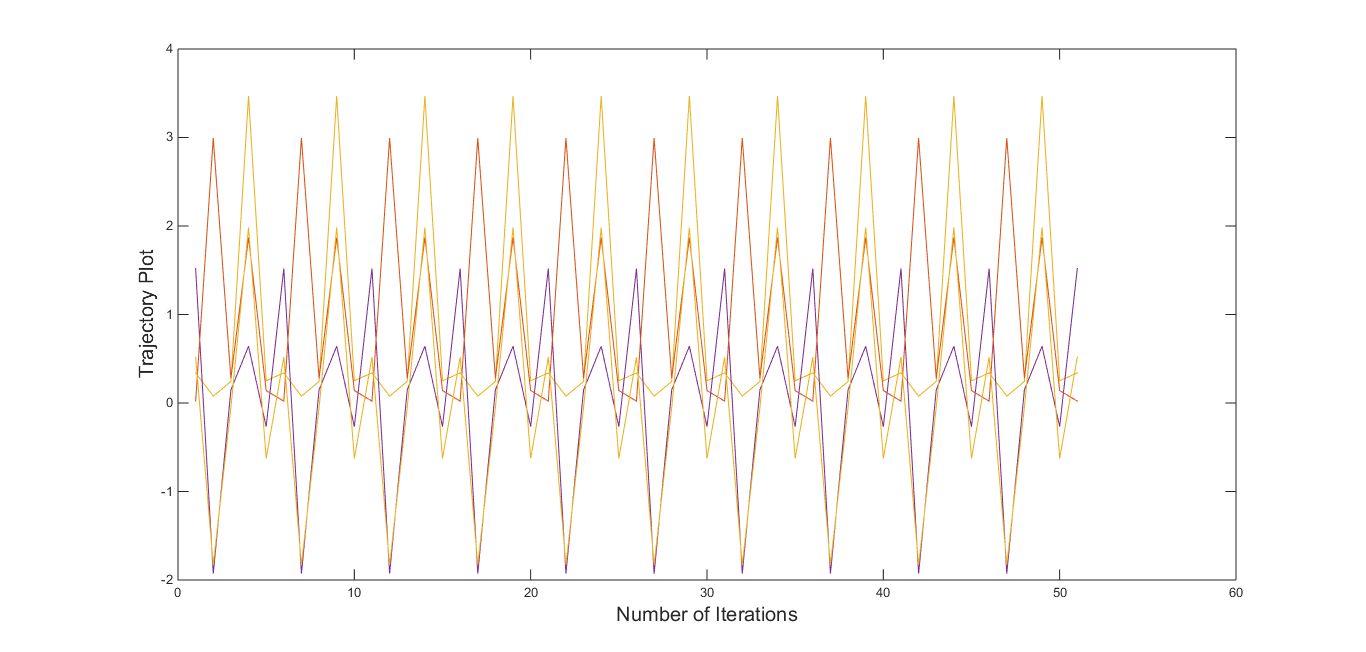}
      \includegraphics [scale=6]{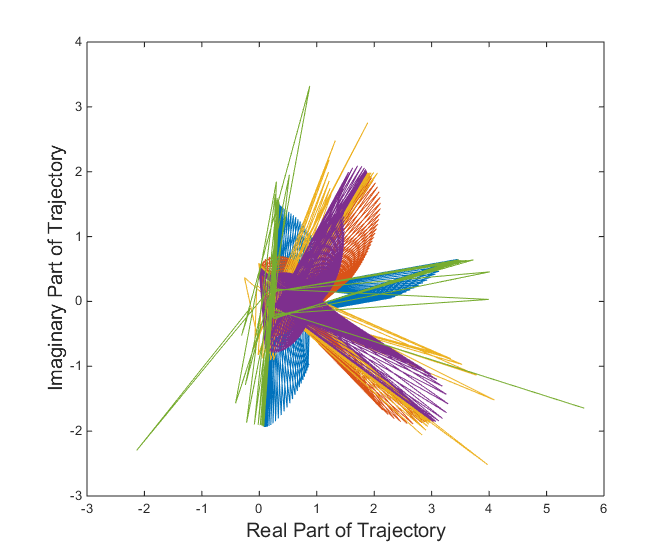}\\
      \end{tabular}
      }
\caption{Periodic trajectory plot (from 800 to 850 iterations of total 1000 iterations) of period 5.}
      \begin{center}

      \end{center}
      \end{figure}

\noindent
\textbf{Example 4.2:} Consider the parameters $\alpha=0.1909 + 0.4283i$ ($\abs{\alpha}=0.4689$), $\beta=0.4820 + 0.1206i
$ ($\abs{\beta}=0.4969$), $\gamma=0.5895 + 0.2262i$ ($\abs{\gamma}=0.6314$) and $\delta=0.3846 + 0.5830i$, ($\abs{\delta}=0.6984$) $\abs{\alpha+\beta}=0.8684$ $\abs{\gamma+\delta}=1.2664$. Here $\abs{\alpha}<\abs{\gamma}$, $\abs{\beta}<\abs{\delta}$, $\abs{\alpha+\beta}<\abs{\gamma+\delta}$. \\  This set up of parameters produce a periodic solution of period $8$. One of the periodic 8-cycles is $\{0.1245073+0.0238456i, 3.7977682-3.81535i, 0.0328615+0.1531786i, -1.76641-3.292791i, -0.296000+0.07258225i, -1.224311+1.8140937i, 0.08601284-0.129181i, 4.8712818+1.8954348i$ and $0.1245073+0.0238456i\}$. The trajectory plots for five different initial values are given in the Fig.$7$.

\begin{figure}[H]
      \centering

      \resizebox{12cm}{!}
      {
      \begin{tabular}{c c}
      \includegraphics [scale=5]{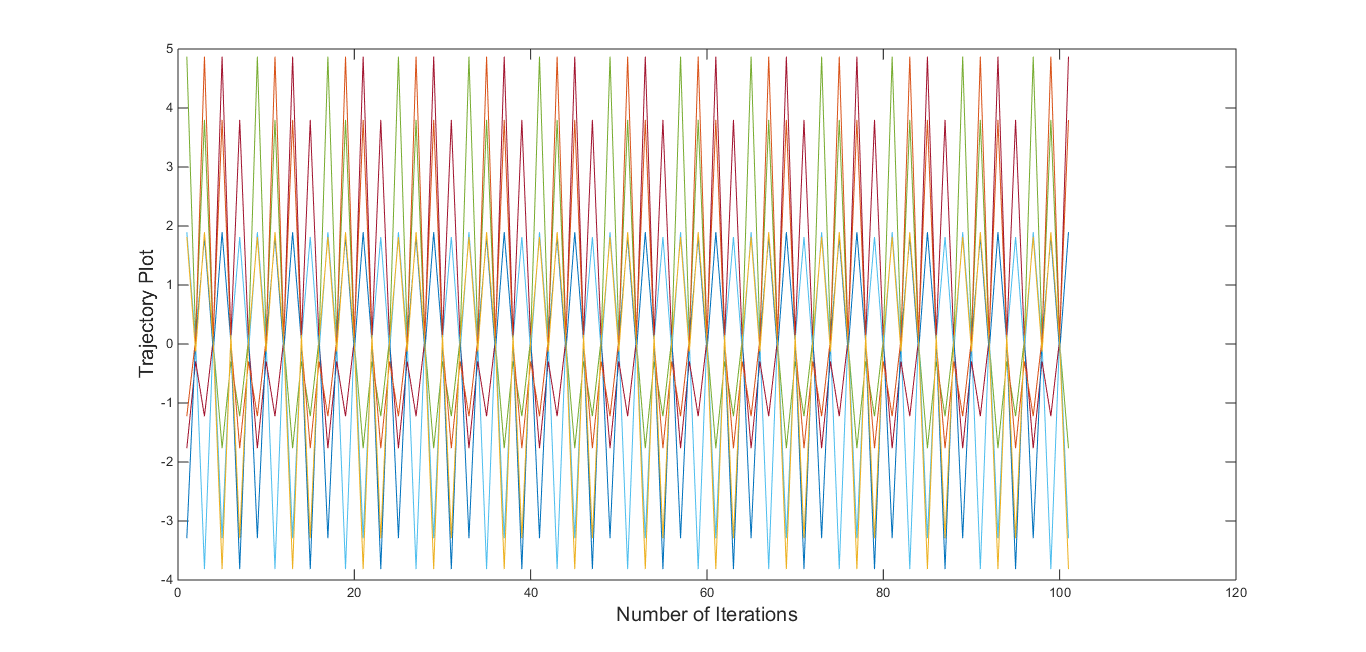}
      \includegraphics [scale=8]{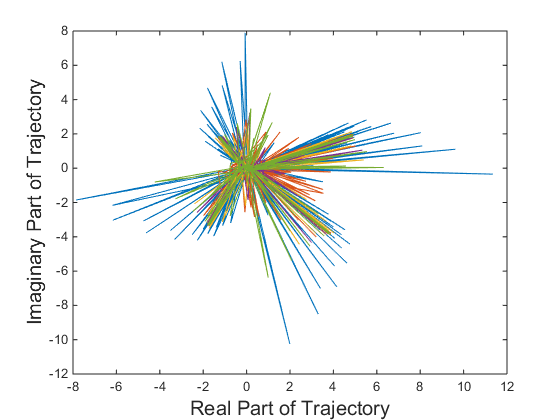}\\
      \end{tabular}
      }
\caption{Periodic trajectory plot (from 900 to 1000 iterations of total 1000 iterations) of period 8.}
      \begin{center}

      \end{center}
      \end{figure}

\noindent
\textbf{Example 4.3:} Consider the parameters $\alpha=0.1557 + 0.8190i$ ($\abs{\alpha}=0.8337$), $\beta=0.6249 + 0.7386i$ ($\abs{\beta}= 0.9675$), $\gamma=0.8051 + 0.0672i$, ($\abs{\gamma}=0.8079$) and $\delta=0.9508 + 0.4976i$, ($\abs{\delta}=1.0731$) $\abs{\alpha+\beta}=1.74224$ $\abs{\gamma+\delta}=1.8445$. \\ Here $\abs{\alpha}>\abs{\gamma}$, $\abs{\beta}<\abs{\delta}$, $\abs{\alpha+\beta}<\abs{\gamma+\delta}$. This set up of parameters produce a periodic solution of period $8$. One of the periodic 13-cycles is $\{ 0.4995-1.9204i, -0.4405+0.5198i, -0.2696-0.6103i, -1.6184+1.5608i, 0.1910-0.1647i, 1.3640+3.7100i, 0.2073-0.0035i, 3.7982+2.2357i, 0.1717+0.1174i, 4.0798-0.3778i, 0.0933+0.2274i, 2.4590-2.0608i, -0.0609+0.3453i, 0.4995-1.9204i \}$. The trajectory plots for five different initial values are given in the Fig.$8$.

\begin{figure}[H]
      \centering

      \resizebox{12cm}{!}
      {
      \begin{tabular}{c c}
      \includegraphics [scale=5]{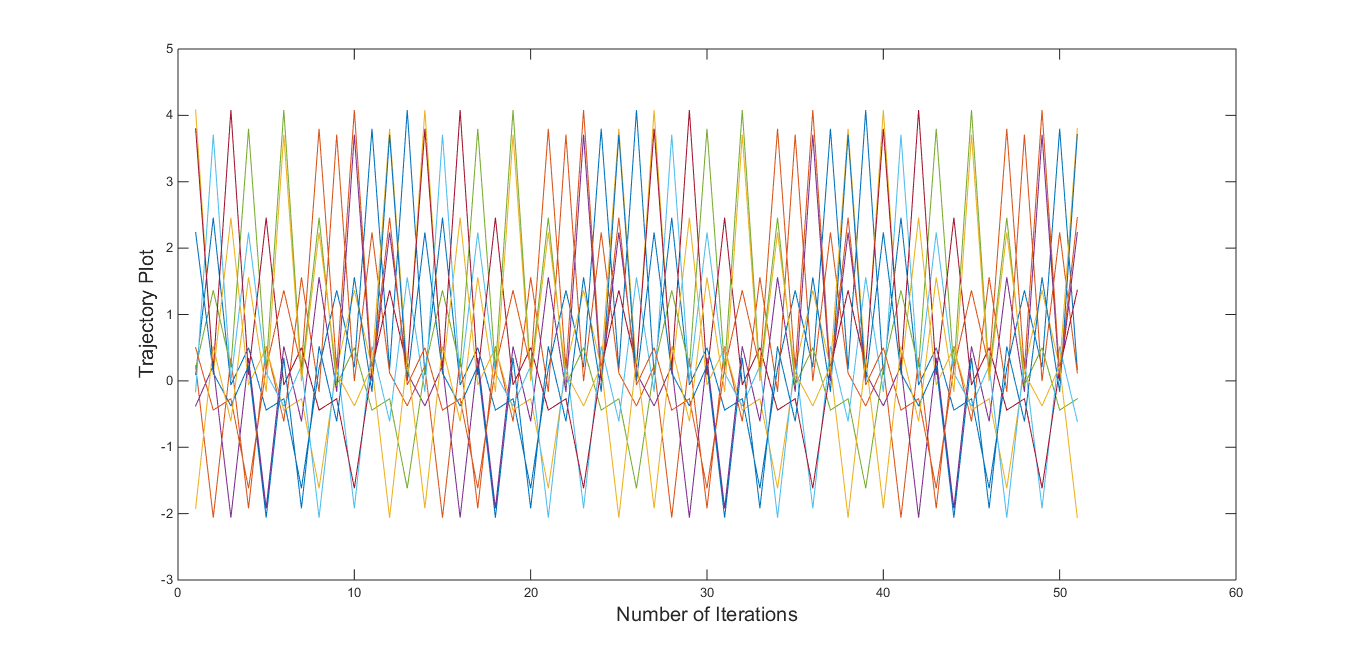}
      \includegraphics [scale=8]{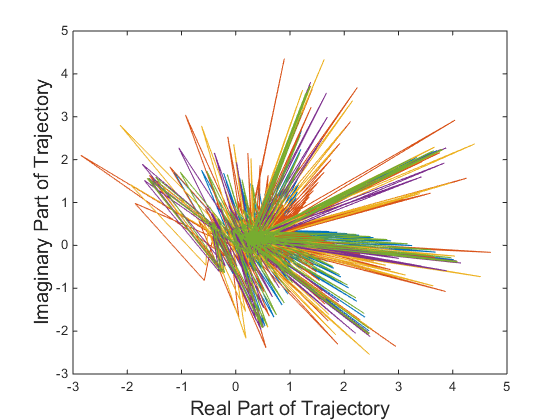}\\
      \end{tabular}
      }
\caption{Periodic trajectory plot (from 1450 to 1500 iterations of total 1500 iterations) of period 13.}
      \begin{center}

      \end{center}
      \end{figure}

\noindent
\textbf{Example 4.4:} Consider the parameters $\alpha=0.6228 + 0.7966i$ ($\abs{\alpha}=1.0112$), $\beta=0.7459 + 0.1255i$ ($\abs{\beta}=0.7564$), $\gamma=0.8224 + 0.0252i$ ($\abs{\gamma}=0.8228$) and $\delta=0.4144 + 0.7314i$, ($\abs{\delta}= 0.8407$) $\abs{\alpha+\beta}=1.6504$ $\abs{\gamma+\delta}=1.4499$. \\ Here $\abs{\alpha}>\abs{\gamma}$, $\abs{\beta}<\abs{\delta}$, $\abs{\alpha+\beta}>\abs{\gamma+\delta}$. This set up of parameters produce a periodic solution of period $8$. One of the periodic $21$-cycles is $\{ 0.1508-0.2674i, 4.2660+1.6796i, 0.2370+0.0609i, 2.6997-2.6189i, 0.0772+0.3937i, -0.3141-1.3571i, -2.2634+2.1519i, 0.0920-0.2518i, 4.6679+2.2400i, 0.2107+0.0451i, 3.0966-2.9892i, 0.0554+0.3390i, -0.5706-1.5730i, -2.0809+0.6391i, 0.1141-0.3835i, 3.5201+2.3141i, 0.2519+0.0117i, 3.2825-2.0879i, 0.1469+0.3346i, -0.0137-1.7473i, -0.8858+1.3368i$ and $0.1508-0.2674i\}$. The trajectory plot for five different initial values is given in the Fig.$9$.

\begin{figure}[H]
      \centering

      \resizebox{12cm}{!}
      {
      \begin{tabular}{c c}
      \includegraphics [scale=5]{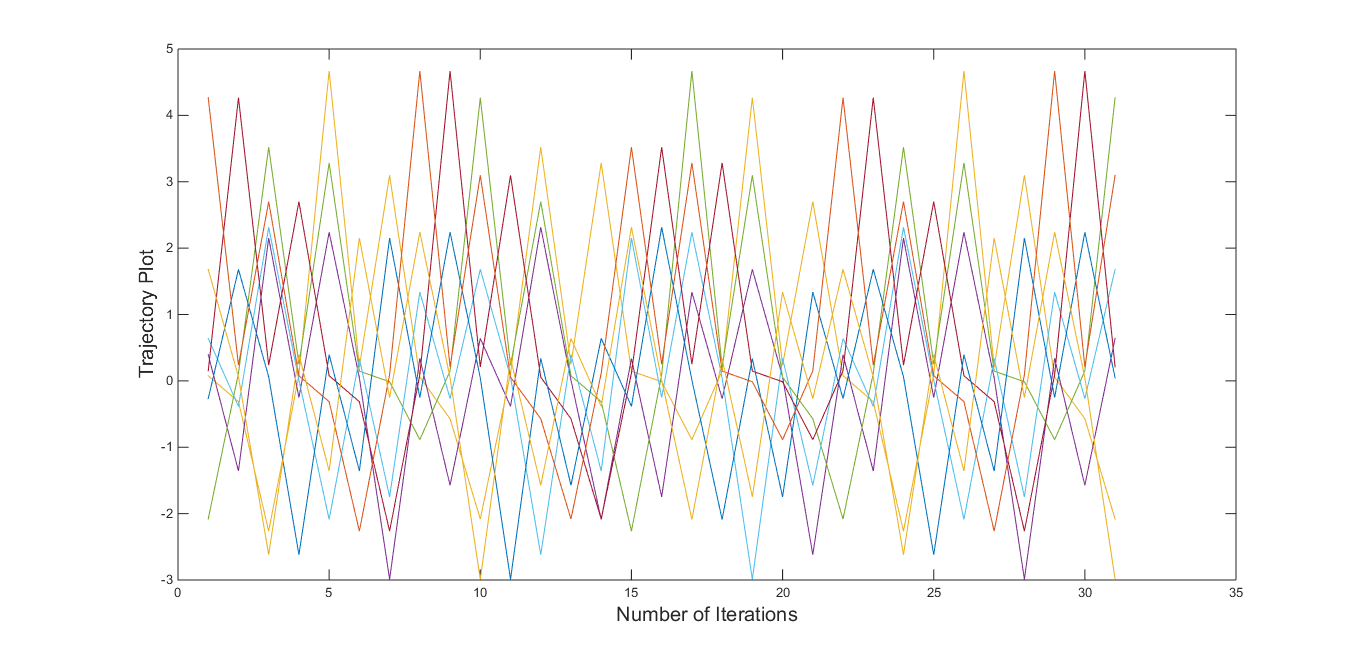}
      \includegraphics [scale=8]{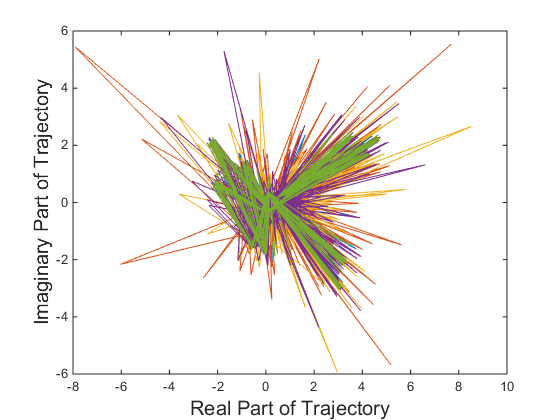}\\
      \end{tabular}
      }
\caption{Periodic trajectory plot (from 1970 to 2000 iterations of total 2000 iterations) of period 21.}
      \begin{center}

      \end{center}
      \end{figure}

\noindent
In the above four different examples, we found different periodic trajectories/solutions of the dynamical system Eq.(\ref{equation:total-equationB}). \\
In the \emph{Example 4.1}, we saw the parameters are obeying $\abs{\alpha}>\abs{\gamma}$, $\abs{\beta}<\abs{\delta}$ and $\abs{\alpha+\beta}=\abs{\gamma+\delta}$ conditions which did not arise in real scenario.\\
In \emph{Example 4.2}, the parameters are satisfying $\abs{\alpha}<\abs{\gamma}$, $\abs{\beta}<\abs{\delta}$ and $\abs{\alpha+\beta}<\abs{\gamma+\delta}$ which condition did not appear in the real set up. \\
In \emph{Example 4.3}, the parameters are satisfying $\abs{\alpha}>\abs{\gamma}$, $\abs{\beta}<\abs{\delta}$ and $\abs{\alpha+\beta}<\abs{\gamma+\delta}$ which condition did not appear in the real set up.\\
In \emph{Example 4.4}, the parameters are satisfying $\abs{\alpha}>\abs{\gamma}$, $\abs{\beta}<\abs{\delta}$ and $\abs{\alpha+\beta}>\abs{\gamma+\delta}$ which condition did not appear in the real set up.\\

\noindent
It is to be noted that in all the above four examples, the parameters satisfy the $\abs{\alpha}>\abs{\gamma}$ or $\abs{\beta}<\abs{\delta}$ conditions which did not arise in real scenario \cite{D-C-N}.

\section{Slow and Fast Convergence}
Here we encounter two solutions of the Eq.(\ref{equation:total-equationB}), one of which is very slow in converging to the fixed point and another one is very fast inc converging to the fixed point of the dynamical system.\\
Here the parameters $\alpha=0.9274 + 0.9175i, (\abs{\alpha}=1.3045)$, $\beta=0.7136 + 0.6183i, (\abs{\beta}=0.9442)$, $\gamma=0.3433 + 0.9360i, (\abs{\gamma}=0.9970)$, $\delta=0.1248 + 0.7306i, (\abs{\delta}=0.7412)$ and $\abs{\alpha+\beta}=2.247$ and $\abs{\gamma+\delta}=1.7310$. That is the parameters are satisfying $\abs{\alpha}<\abs{\gamma}$, $\abs{\beta}>\abs{\delta}$ and ${\abs{\alpha+\beta}>\abs{\gamma+\delta}}$ which was the necessary creation for convergence to \emph{zero} in the real set up but here under this set of parameters the solution are convergent for any initial values to $1.10789-0.305048i$ (\emph{not zero}!) with about $10^{5}$ number of iterations which is slow indeed.\\
The trajectory plots with ten different arbitrary initial values are given in the following figure Fig.$10$.\\

\begin{figure}[H]
      \centering

      \resizebox{12cm}{!}
      {
      \begin{tabular}{c c}
      \includegraphics [scale=5]{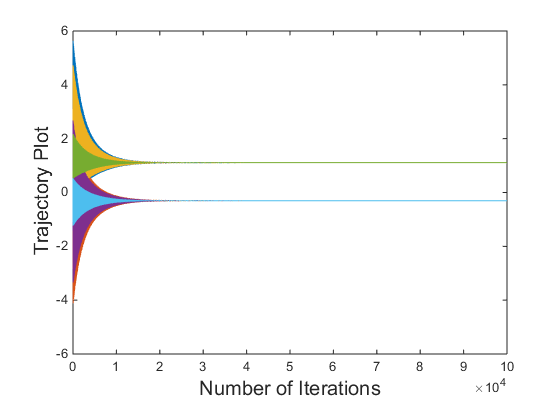}
      \includegraphics [scale=5]{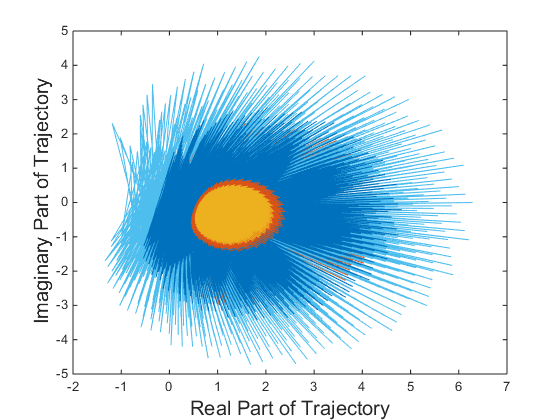}\\
      \end{tabular}
      }
\caption{Slow convergent trajectory plot.}
      \begin{center}

      \end{center}
      \end{figure}

\noindent
Here it is noted that the trajectory is unstable for any initial values since it the condition is $\abs{f'_{\alpha, \beta, \gamma, \delta}(\bar{z_{i}})}=7.45235>1$ satisfied.

\noindent
Here the parameters $\alpha=0.2748 + 0.2415i, (\abs{\alpha}=0.3658)$, $\beta=0.2431 + 0.1542i, (\abs{\beta}=0.2879)$, $\gamma=0.9564 + 0.9357i, (\abs{\gamma}=1.3380)$, $\delta=0.8187 + 0.7283i, (\abs{\delta}=1.0957)$ and $\abs{\alpha+\beta}=0.6517$ and $\abs{\gamma+\delta}=2.433$. That is the parameters are satisfying $\abs{\alpha}>\abs{\gamma}$, $\abs{\beta}<\abs{\delta}$ and ${\abs{\alpha+\beta}<\abs{\gamma+\delta}}$ which condition did not arise at all in real scenario but here under this set of parameters, the solutions are convergent for any initial values to $0.515394-0.03072$ with about $10^{3}$ number of iterations which is fast indeed. The trajectory plots with ten different arbitrary initial values are given in the following figure Fig.$11$.\\\\\\

\begin{figure}[H]
      \centering

      \resizebox{12cm}{!}
      {
      \begin{tabular}{c c}
      \includegraphics [scale=5]{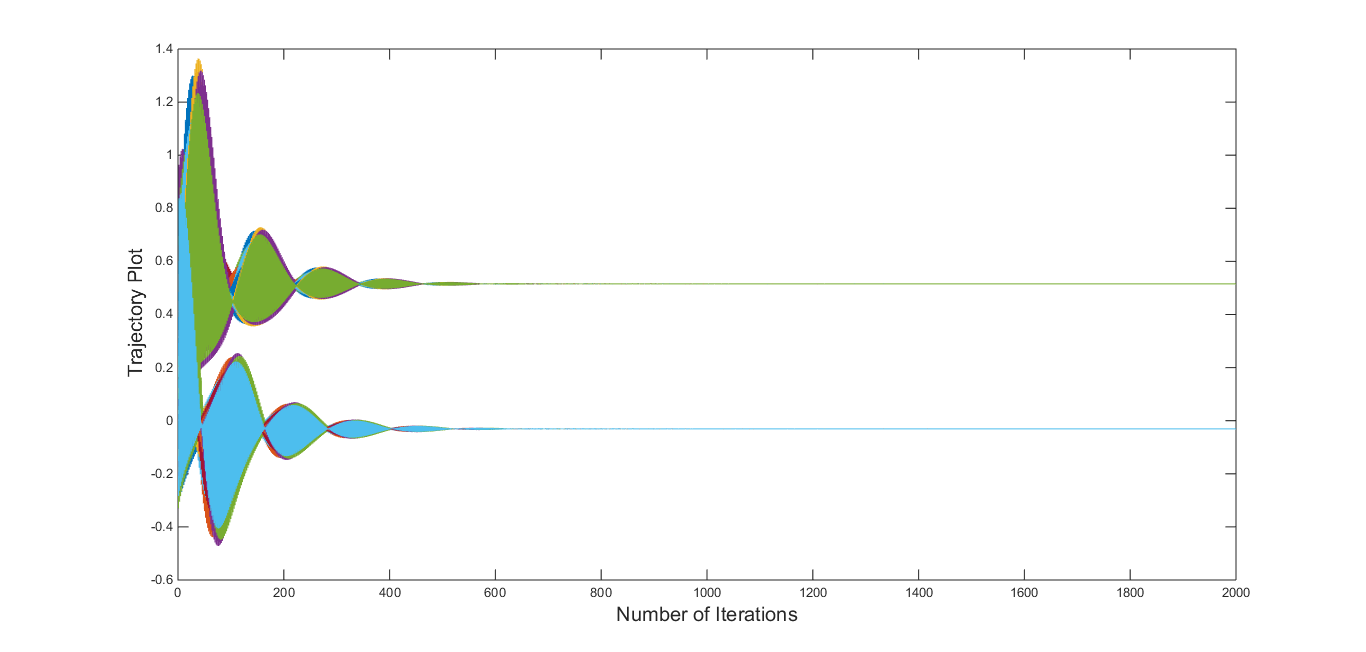}
      \includegraphics [scale=8]{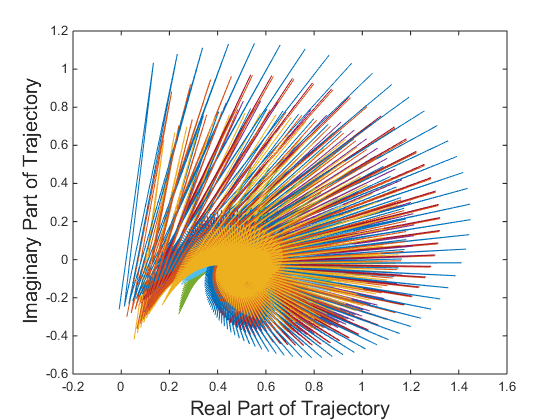}\\
      \end{tabular}
      }
\caption{Fast convergent trajectory plot.}
      \begin{center}

      \end{center}
      \end{figure}
\noindent
Here it is noted that the all the ten trajectories are unstable since it the condition is $\abs{f'_{\alpha, \beta, \gamma, \delta}(\bar{z_{i}})}=4.85602>1$ satisfied.

\noindent
It is interesting to note that in both cases above, the trajectories are unstable as $\abs{f'_{\alpha, \beta, \gamma, \delta}(\bar{z_{i}})}>1$ but in one case the convergence is slow and in another case is fast.

\section{Chaotic Solutions}
Here we have explored the existence of chaotic solutions of the dynamical system Eq.(\ref{equation:total-equationB}). Computationally, some chaotic solutions of the dynamical system Eq.(\ref{equation:total-equationB}) for some parameters $\alpha$, $\beta$, $\gamma$ and $\delta$ which are given in the following Table $1$ are fetched. \\
\noindent
The largest Lyapunov exponent is calculated for all such solutions of the dynamical system Eq.(\ref{equation:total-equationA}) numerically \cite{Wolf} to show the trajectories are chaotic.\\
\noindent
From computational evidence, it is arguable that for complex parameters $\alpha$, $\beta$, $\gamma$ and $\delta$ which are stated in the following Table $1$, the solutions are chaotic for any initial values.\\

\begin{table}[H]

\begin{tabular}{| m{8cm}||m{2.2cm}| |m{4.5cm}|}
\hline
\centering   \textbf{Parameters} &
\begin{center}
\textbf{Parameters Conditions}
\end{center}
 &
\begin{center}
\textbf{Lyapunav exponent (Fractal like/unlike}
\end{center} \\
\hline
\centering $\alpha=0.6849 + 0.2083i$, $\beta=0.6082 + 0.3262i$, $\gamma=0.8808 + 0.1334i
$, $\delta=0.1024 + 0.9591i$, $\abs{\alpha}=0.7158$, $\abs{\beta}=0.6901$, $\abs{\gamma}=0.8909$, $\abs{\delta}=0.9646$, $\abs{\alpha+\beta}=1.3991$ \& $\abs{\gamma+\delta}=1.4698$ & \begin{center}
$\abs{\alpha}<\abs{\gamma}$, $\abs{\beta}<\abs{\delta}$, $\abs{\alpha+\beta}<\abs{\gamma+\delta}$
\end{center}
&
\begin{center}
Lyapunav Exponent:$1.3342$ \\ (Fig.12: Fractal-unlike)
\end{center}\\
\hline
\centering $\alpha=0.8491 + 0.9340i$, $\beta=0.6787 + 0.7577i$, $\gamma=0.7431 + 0.3922i
$, $\delta=0.6555 + 0.1712i$, $\abs{\alpha}=1.2623$, $\abs{\beta}=1.0173$, $\abs{\gamma}=0.8403$, $\abs{\delta}=0.6775$, $\abs{\alpha+\beta}=2.2795$ \& $\abs{\gamma+\delta}=1.5078$ & \begin{center}
$\abs{\alpha}>\abs{\gamma}$, $\abs{\beta}>\abs{\delta}$, $\abs{\alpha+\beta}>\abs{\gamma+\delta}$
\end{center}
&
\begin{center}
Lyapunav Exponent: $1.4765$, Fractal dimension:$1.3435$\\ (Fig.13: Fractal-like)
\end{center}\\
\hline
\centering $\alpha=0.9322 + 0.8351i$, $\beta=0.8954 + 0.5825i$, $\gamma=0.5827 + 0.8549i
$, $\delta=0.0349 + 0.8854i$, $\abs{\alpha}=1.2515$, $\abs{\beta}=1.0682$, $\abs{\gamma}=1.0346$, $\abs{\delta}=0.8861
$, $\abs{\alpha+\beta}=2.3130$ \& $\abs{\gamma+\delta}=1.8467$ & \begin{center}
$\abs{\alpha}>\abs{\gamma}$, $\abs{\beta}>\abs{\delta}$, $\abs{\alpha+\beta}>\abs{\gamma+\delta}$
\end{center}
&
\begin{center}
Lyapunav Exponent: $1.6225$, Fractal dimension:$1.4235$  \\ (Fig.14: Fractal-like)
\end{center}\\
\hline
\centering $\alpha=0.5078 + 0.5856i$, $\beta=0.7629 + 0.0830i$, $\gamma=0.6616 + 0.5170i
$, $\delta=0.1710 + 0.9386i$, $\abs{\alpha}=0.7751$, $\abs{\beta}=0.7674$, $\abs{\gamma}=0.8396$, $\abs{\delta}=0.9540$, $\abs{\alpha+\beta}=1.4359$ \& $\abs{\gamma+\delta}=1.6769$ & \begin{center}
$\abs{\alpha}<\abs{\gamma}$, $\abs{\beta}<\abs{\delta}$, $\abs{\alpha+\beta}<\abs{\gamma+\delta}$
\end{center}
&
\begin{center}
Lyapunav Exponent: $1.4872$ \\ (Fig.15: Fractal-unlike)
\end{center}\\
\hline
\end{tabular}
\caption{Chaotic trajectories of the equation Eq.(\ref{equation:total-equationB}) for different choice of parameters and ten set of initial values.}
\label{Table:}
\end{table}

\noindent
 The chaotic trajectory plots including corresponding complex plots of four examples whose parameters are given in Table $1$ starting from top row are given the following Fig.$12$, Fig.$13$, Fig.$14$ and Fig. $15$ respectively.

\begin{figure}[H]
      \centering

      \resizebox{14cm}{!}
      {
      \begin{tabular}{c c}
      \includegraphics [scale=6]{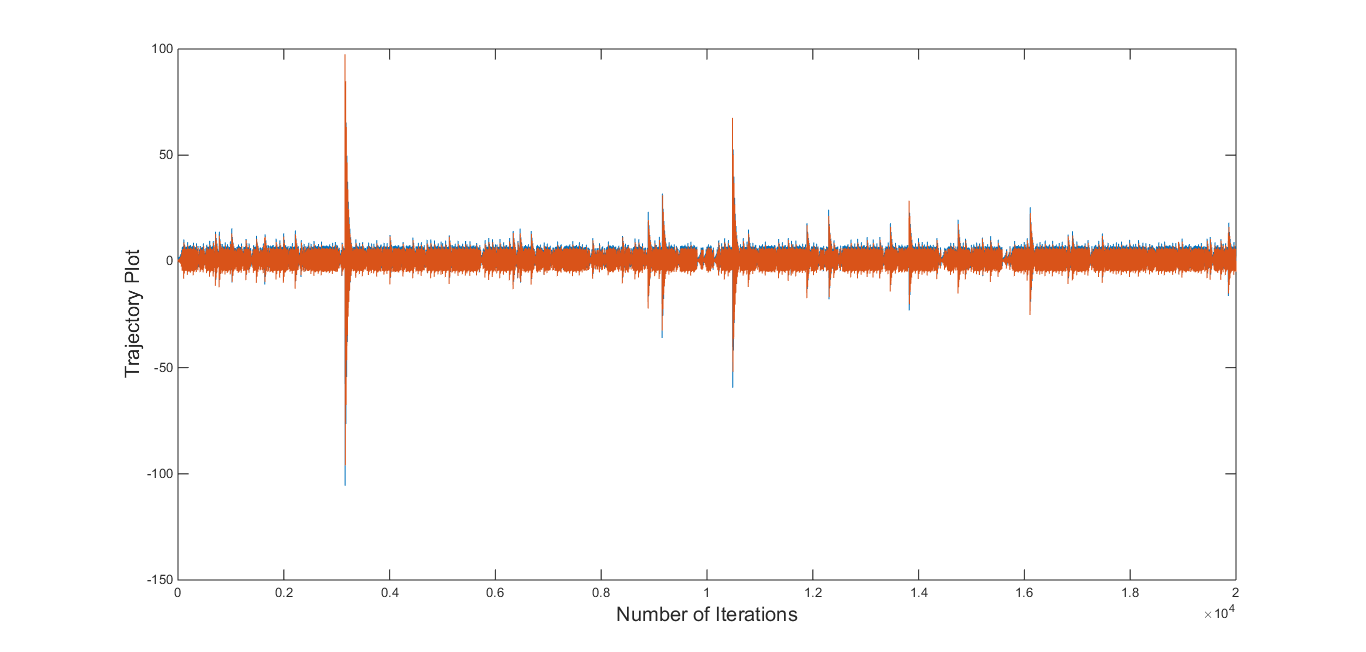}
      \includegraphics [scale=6]{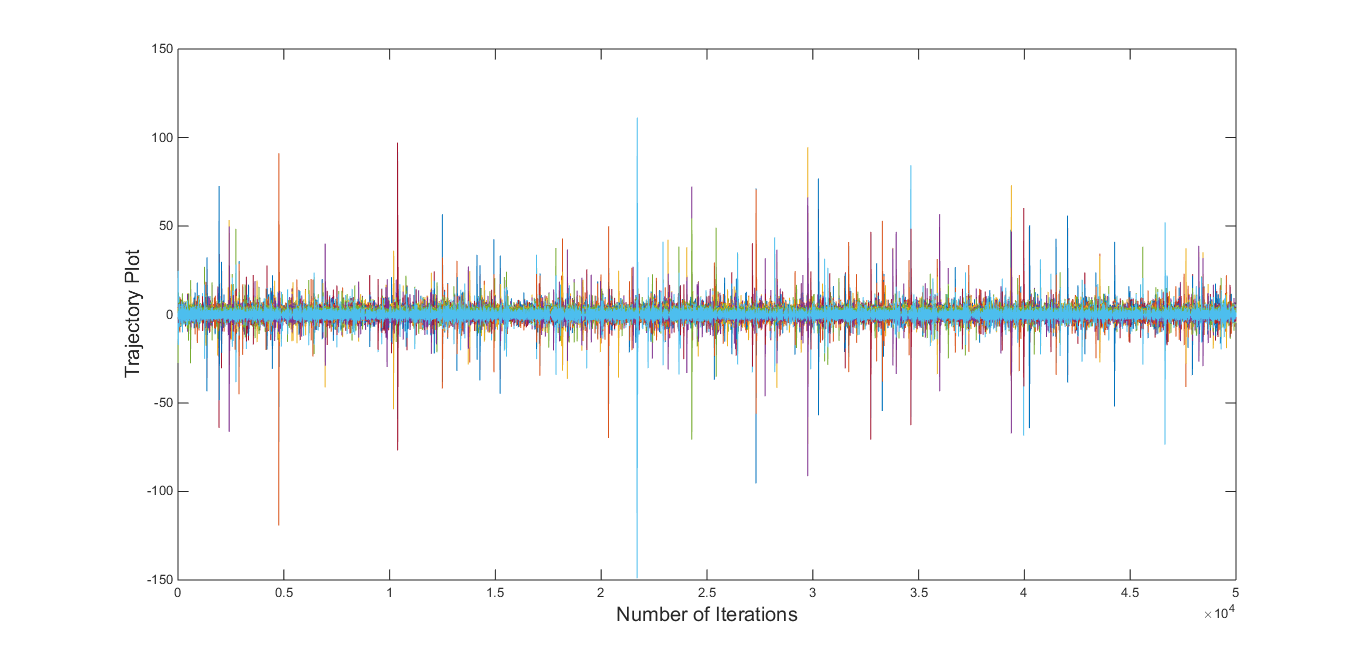}\\
      \includegraphics [scale=6]{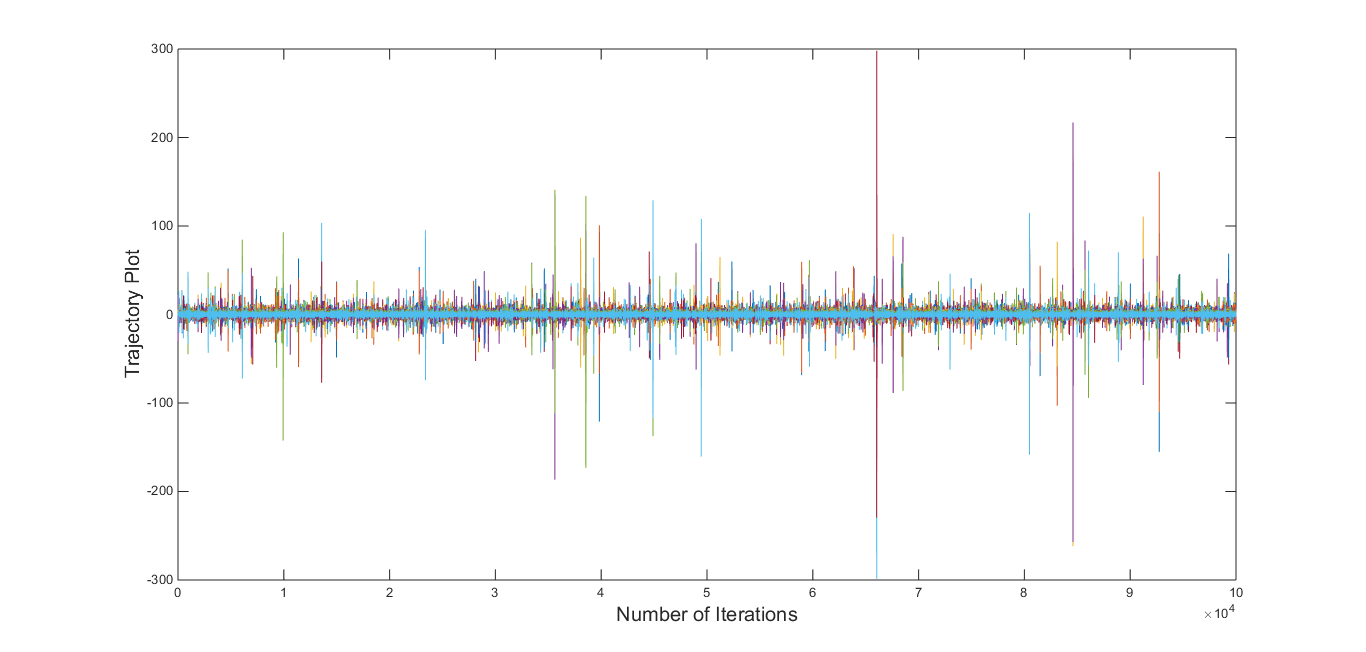}
      \includegraphics [scale=6]{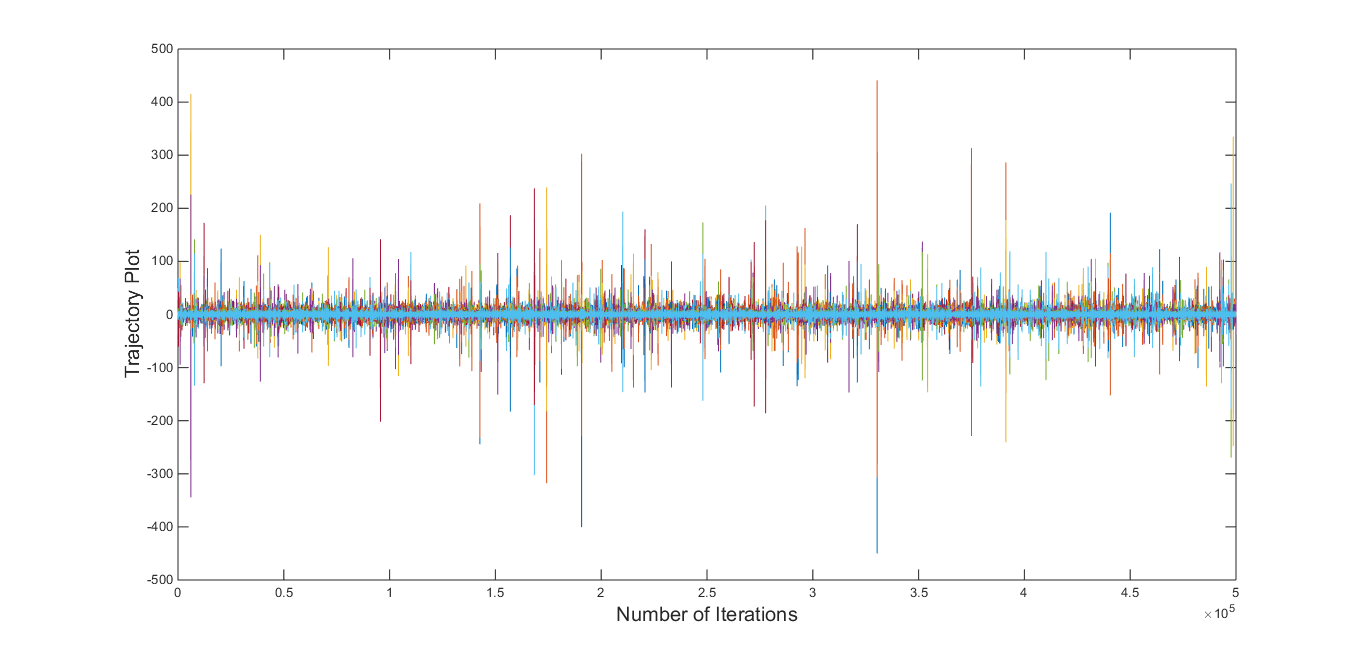}\\
      \end{tabular}
      }
\caption{Chaotic Trajectories of the equation Eq.(\ref{equation:total-equationB}) (Refer Row-1 of Table-1) with 10 set of initial values (10 different colors of trajectories).}
      \begin{center}

      \end{center}
      \end{figure}
\noindent

\begin{figure}[H]
      \centering

      \resizebox{14cm}{!}
      {
      \begin{tabular}{c c}
      \includegraphics [scale=6]{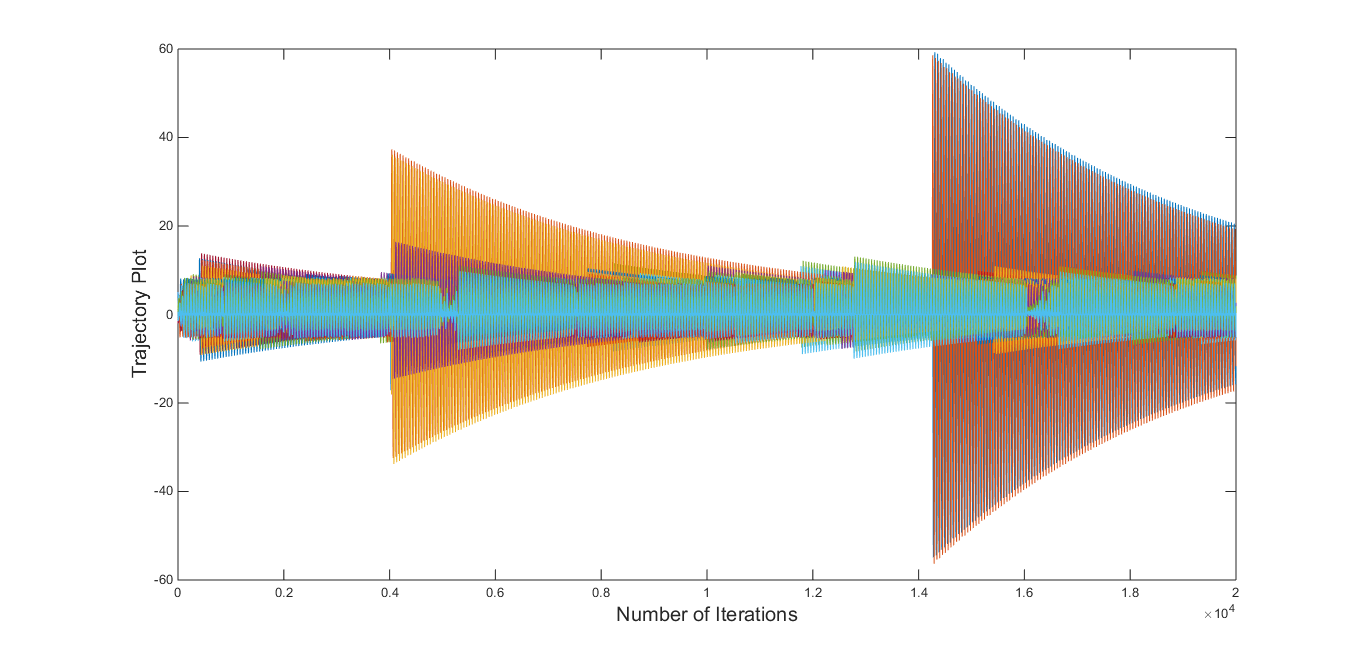}
      \includegraphics [scale=6]{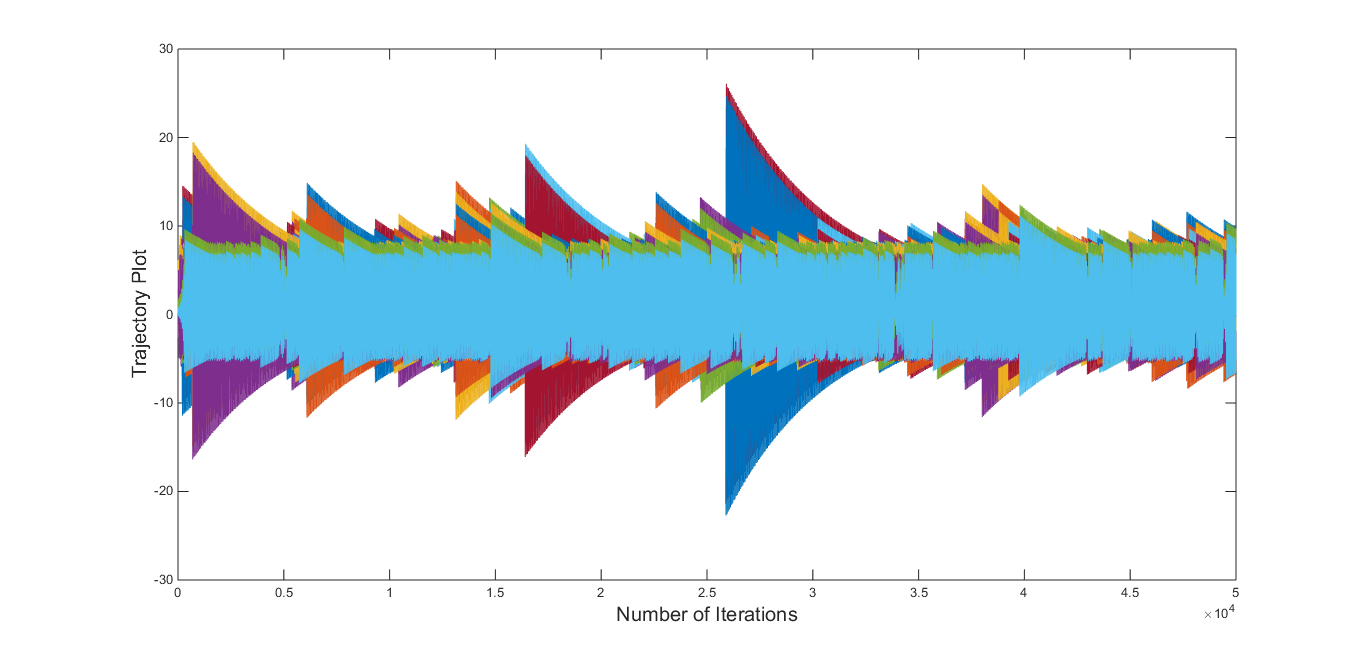}\\
      \includegraphics [scale=6]{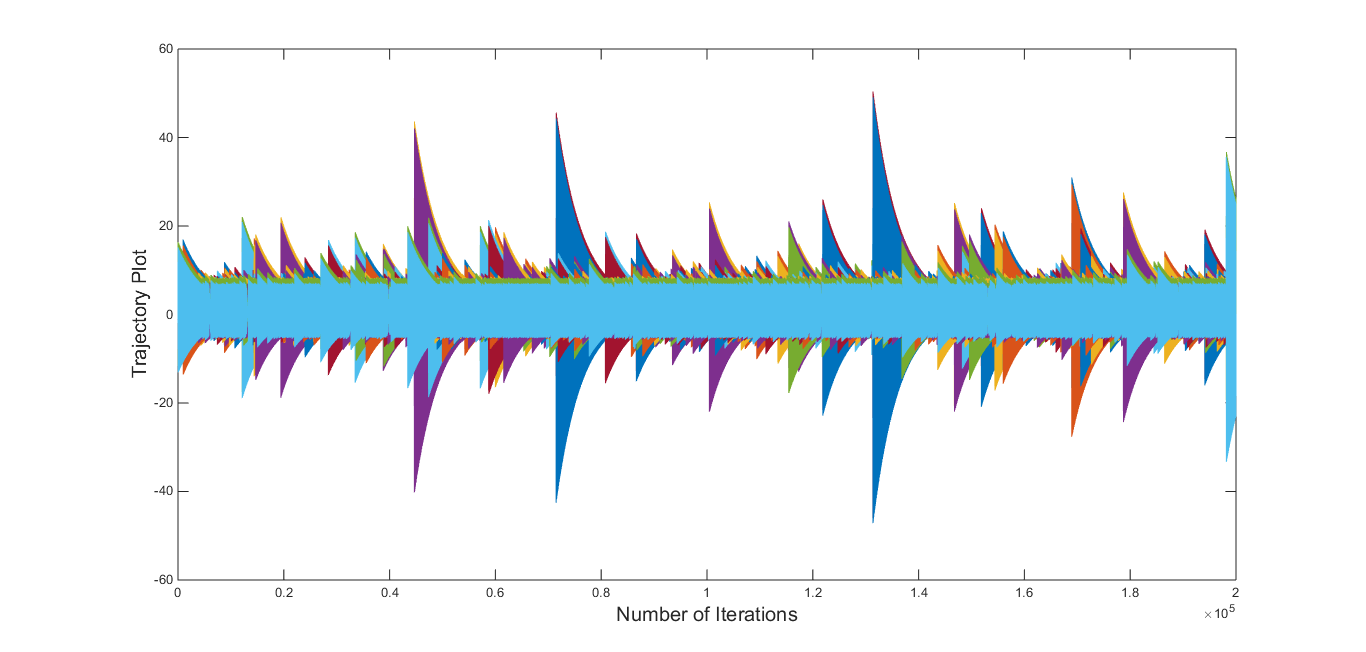}
      \includegraphics [scale=6]{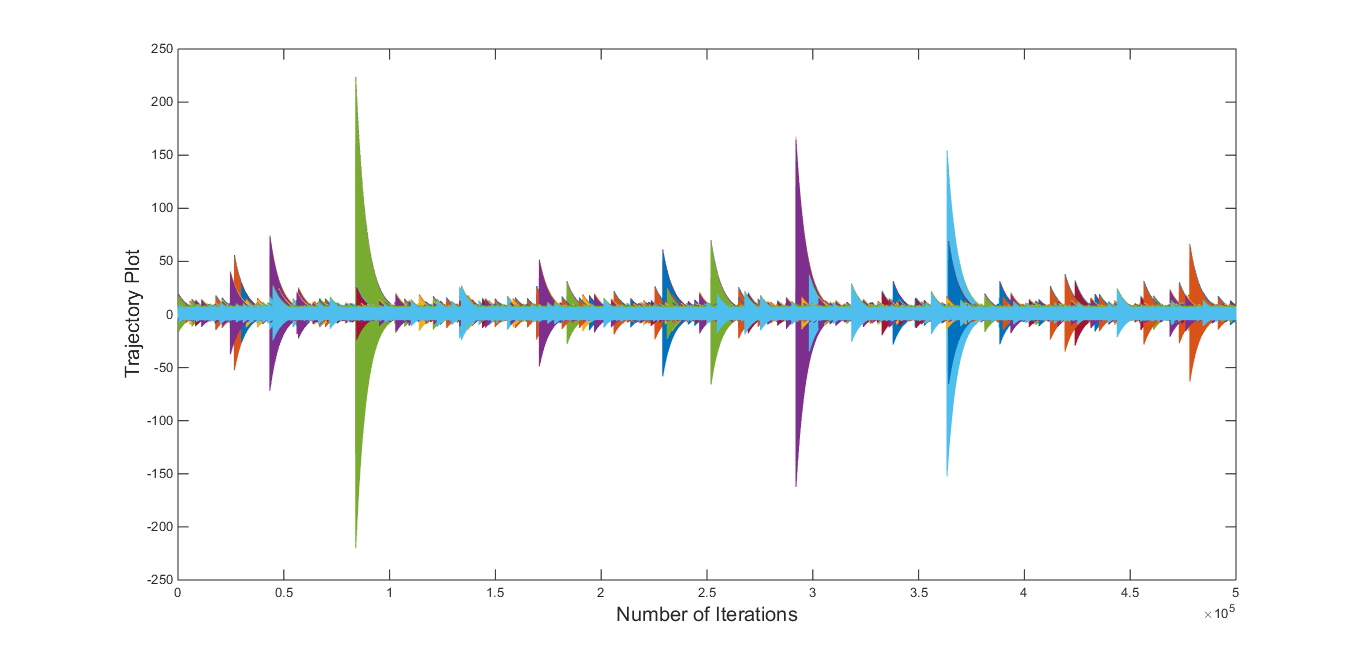}\\
      \end{tabular}
      }
\caption{Chaotic Trajectories of the equation Eq.(\ref{equation:total-equationB}) (Refer Row-2 of Table-1) with 10 set of initial values (10 different colors of trajectories).}
      \begin{center}

      \end{center}
      \end{figure}
\noindent

\begin{figure}[H]
      \centering

      \resizebox{14cm}{!}
      {
      \begin{tabular}{c c}
      \includegraphics [scale=6]{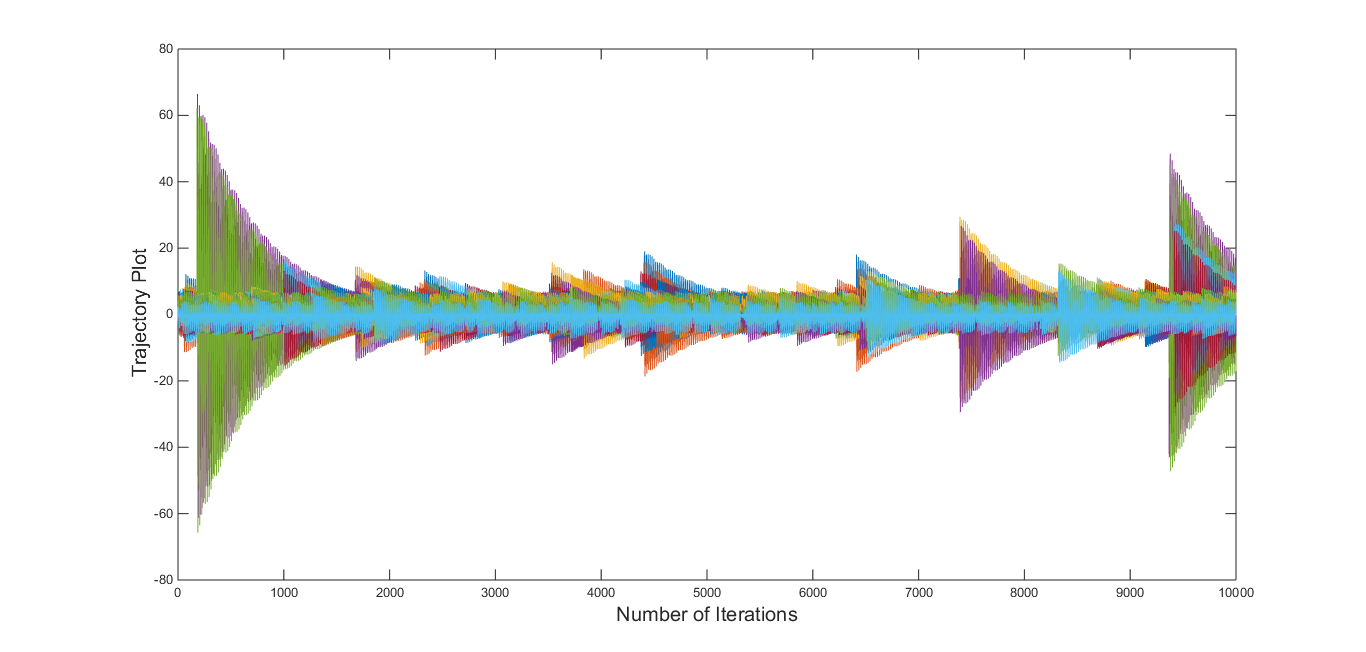}
      \includegraphics [scale=6]{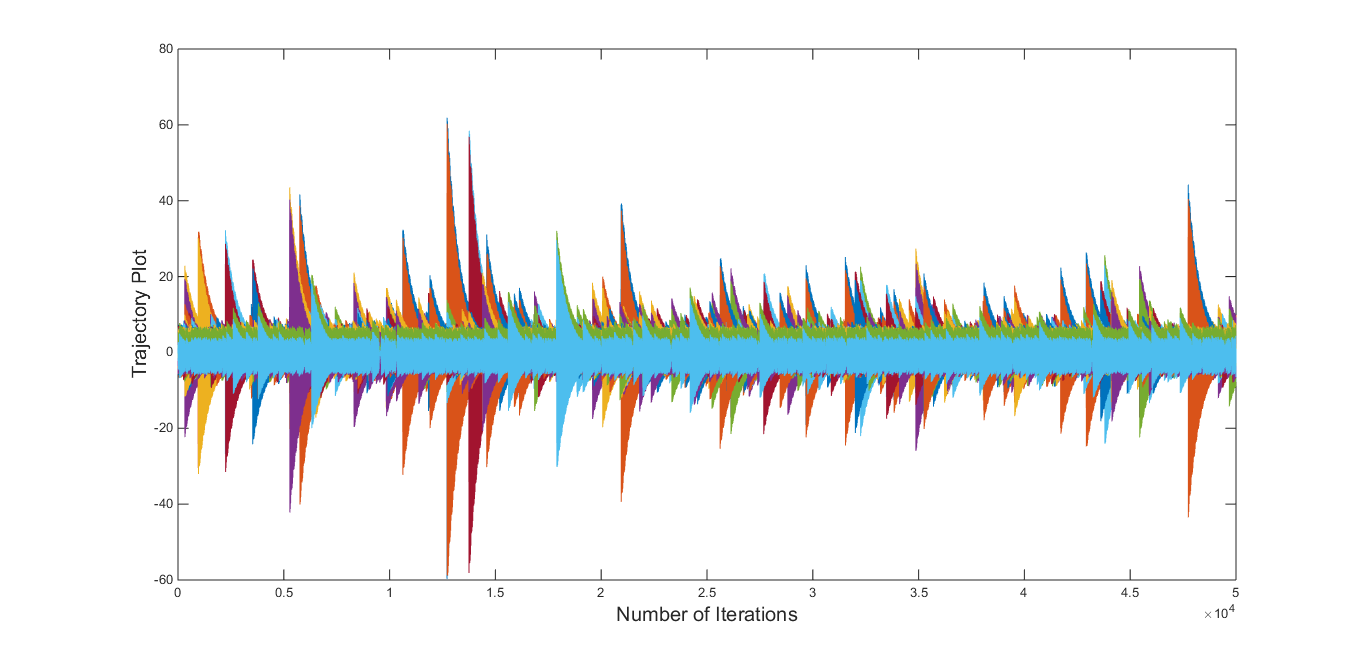}\\
      \includegraphics [scale=6]{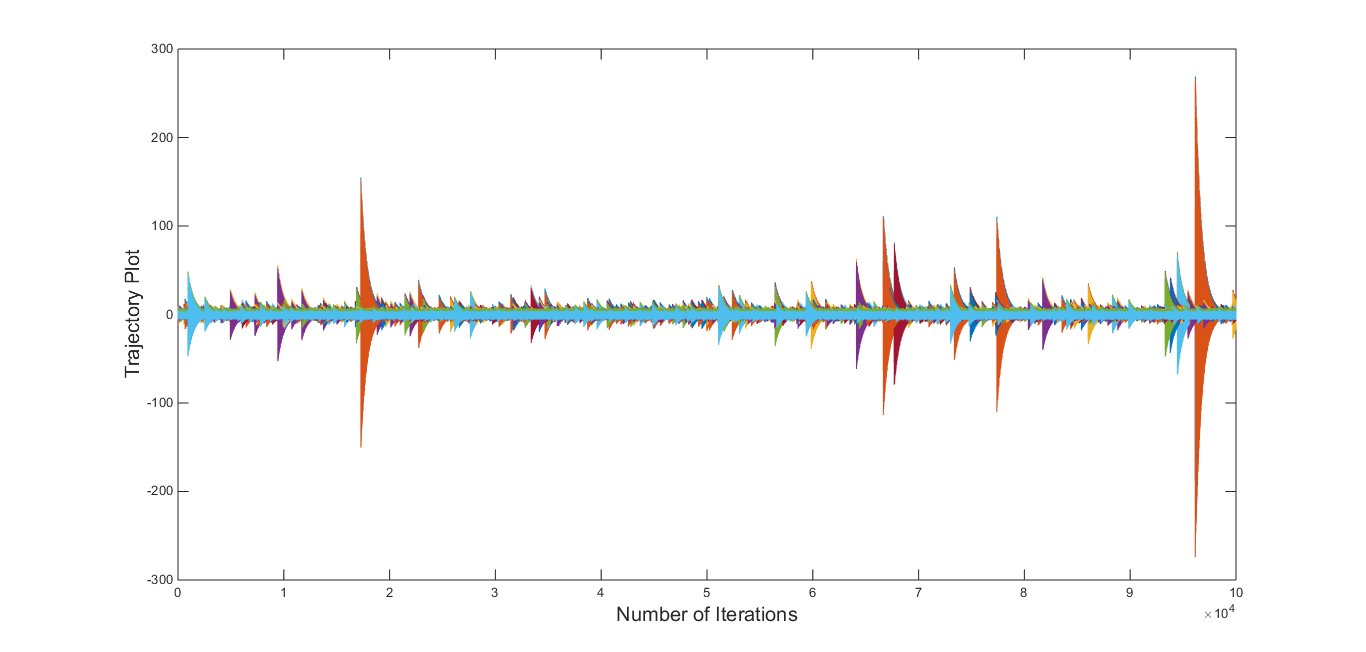}
      \includegraphics [scale=6]{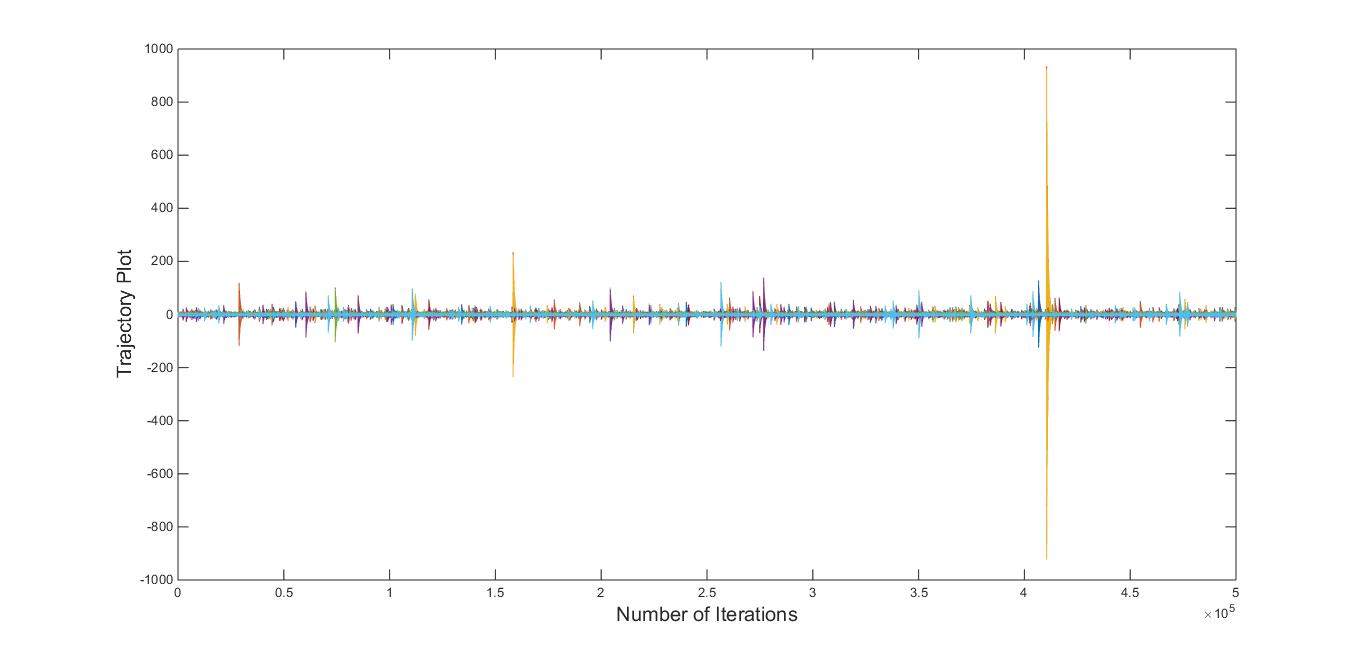}\\
      \end{tabular}
      }
\caption{Chaotic Trajectories of the equation Eq.(\ref{equation:total-equationB}) (Refer Row-3 of Table-1) with 10 set of initial values (10 different colors of trajectories).}
      \begin{center}

      \end{center}
      \end{figure}
\noindent

\begin{figure}[H]
      \centering

      \resizebox{12cm}{!}
      {
      \begin{tabular}{c c}
      \includegraphics [scale=6]{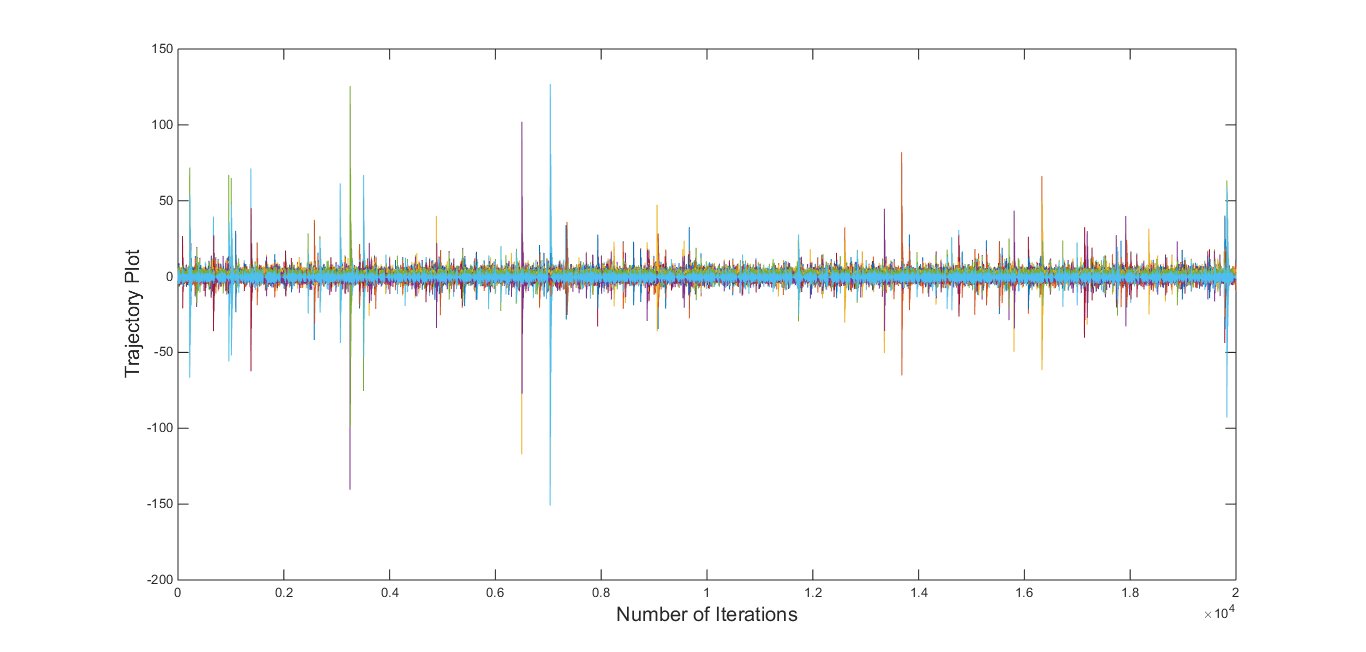}
      \includegraphics [scale=6]{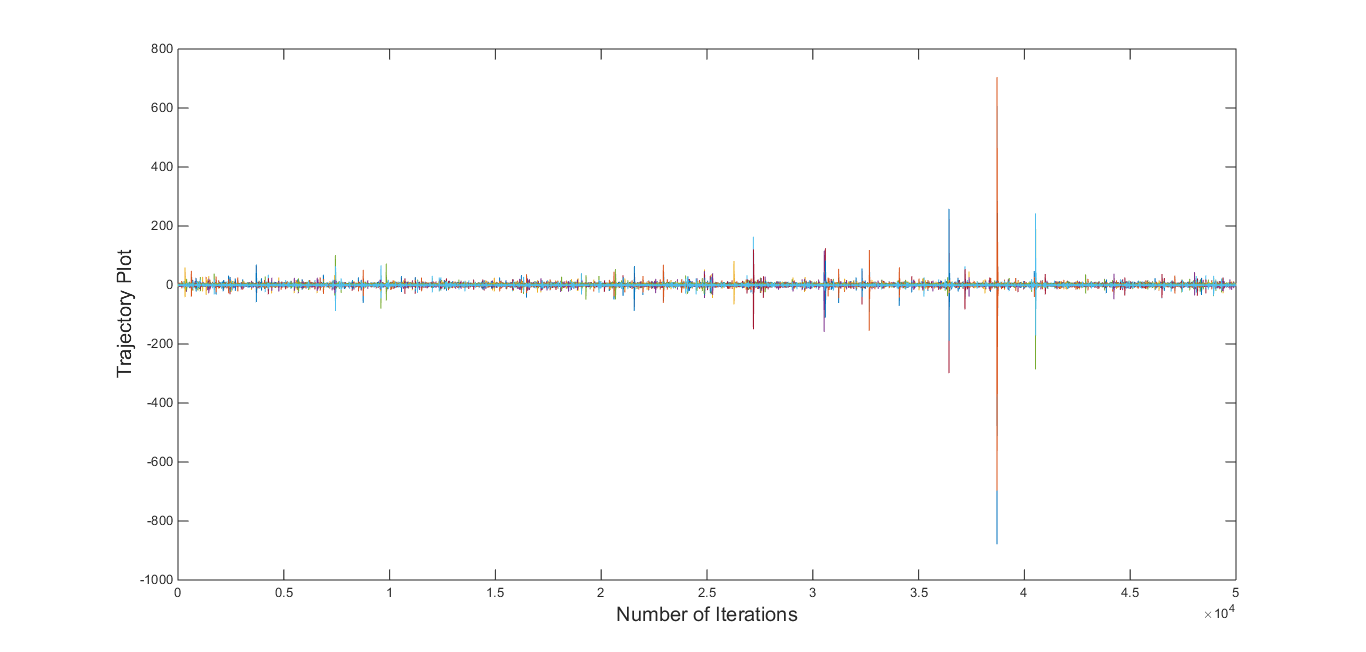}\\
      \includegraphics [scale=6]{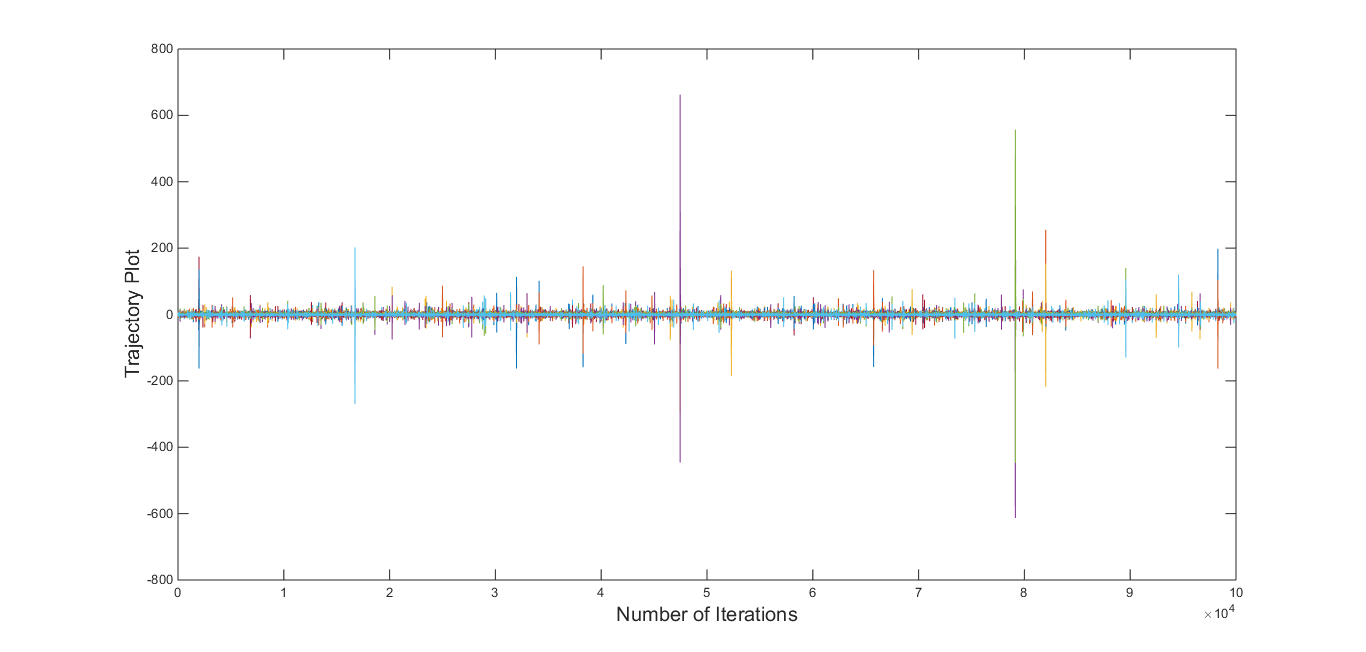}
      \includegraphics [scale=6]{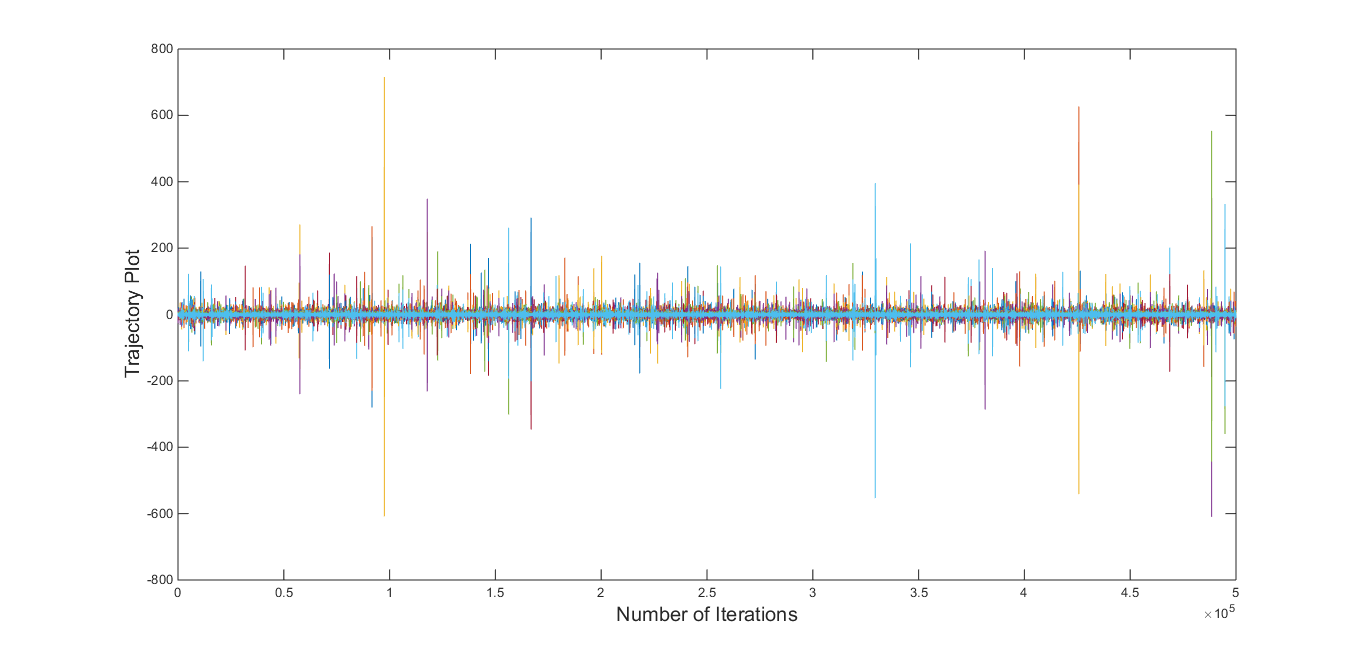}\\
      \end{tabular}
      }
\caption{Chaotic Trajectories of the equation Eq.(\ref{equation:total-equationB}) (Refer Row-4 of Table-1) with 10 set of initial values (10 different colors of trajectories).}
      \begin{center}

      \end{center}
      \end{figure}
\noindent

\noindent
It is needless to mention that the above fractal-like and fractal-unlike trajectories are chaotic as they have largest Lyapunav exponent as positive which are shown in the Table $1$. The fractal-like trajectories are proven to be fractals by the fractal dimension which are fractional. \\

\noindent
The fractal-unlike chaotic trajectories are plotted for $10$ different initial values in the Fig. $12$, and Fig.$15$. The fractal-like chaotic trajectories are plotted for ten initial values in the Fig.$13$ and Fig. $14$. \\ It is observed that in the in the case of fractal-like trajectories, the parameters follow the following conditions: $\abs{\alpha}<\abs{\gamma}$, $\abs{\beta}<\abs{\delta}$, $\abs{\alpha+\beta}<\abs{\gamma+\delta}$ and in fractal unlike trajectories, the parameters follow the conditions: $\abs{\alpha}>\abs{\gamma}$, $\abs{\beta}>\abs{\delta}$, $\abs{\alpha+\beta}>\abs{\gamma+\delta}$. We also perform a set of 500 such chaotic trajectories are studied and it is found that the parameters corresponding to the fractal-like chaotic trajectories are following the condition: $\abs{\alpha}<\abs{\gamma}$, $\abs{\beta}<\abs{\delta}$, $\abs{\alpha+\beta}<\abs{\gamma+\delta}$ and the same for fractal-unlike chaotic trajectories are following the condition: $\abs{\alpha}>\abs{\gamma}$, $\abs{\beta}>\abs{\delta}$, $\abs{\alpha+\beta}>\abs{\gamma+\delta}$ (data not shown). From these two computational observations, the following conjecture has been made.

\begin{conjecture}
The chaotic trajectories Eq.(\ref{equation:total-equationB}) are fractal-like only if the parameters satisfy the conditions: $\abs{\alpha}<\abs{\gamma}$, $\abs{\beta}<\abs{\delta}$, $\abs{\alpha+\beta}<\abs{\gamma+\delta}$ and fractal-unlike only if $\abs{\alpha}>\abs{\gamma}$, $\abs{\beta}>\abs{\delta}$, $\abs{\alpha+\beta}>\abs{\gamma+\delta}$

\end{conjecture}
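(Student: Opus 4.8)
\noindent
The plan is to translate the question into the dynamics of rational maps on the Riemann sphere. For $\gamma\neq 0$, $\beta\neq 0$ and $\alpha\delta\neq\beta\gamma$ --- outside of which the map degenerates to a M\"obius transformation, where no chaos occurs --- $f_{\alpha,\beta,\gamma,\delta}$ extends to a degree-two rational self-map of $\hat{\mathbb{C}}$, and a short normalisation argument shows that (generically) \emph{every} degree-two rational map is M\"obius-conjugate to one of this form; so the conjecture is in effect a statement about all of degree-two complex dynamics. The accumulation set of a genuinely chaotic orbit --- one with positive Lyapunov exponent, as in Table~1 --- cannot be an attracting cycle or an invariant curve in a rotation domain, hence must lie in or shadow the Julia set $J(f)$; the only clean reading of the fractal/non-fractal dichotomy is then \emph{fractal-like} $\equiv$ ``$J(f)$ is a proper compact subset of $\hat{\mathbb{C}}$ of non-integer Hausdorff (equivalently box-counting) dimension'' and \emph{fractal-unlike} $\equiv$ ``$J(f)=\hat{\mathbb{C}}$'' (dimension $2$). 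The first task is therefore to fix these definitions and to verify that, for the parameters of Table~1, the plotted orbits really are governed by $J(f)$ in this sense.

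\noindent
Next I would read off the dynamical data on which everything hinges. Since $f(\infty)=0$ and $f(0)=\infty$, the set $\{0,\infty\}$ is a $2$-cycle whose multiplier, in the charts $w=1/z$, is $\alpha\delta/(\beta\gamma)$; the two finite critical points are the roots of $\alpha\gamma z^{2}+2\beta\gamma z+\beta\delta=0$; and the three finite fixed points carry the multipliers $f'(\bar z_i)$ of the stability criterion of Section~2, subject to the index relation $\sum_i (1-f'(\bar z_i))^{-1}=1$. The core of the argument is a dictionary between the three magnitude inequalities and these data: $|\alpha|$ versus $|\gamma|$ controls the local scaling of $f$ at $\infty$ (where $f(z)\sim\alpha/(\gamma z)$), $|\beta|$ versus $|\delta|$ controls it at $0$ (where $f(z)\sim\beta/(\delta z)$), and $|\alpha+\beta|$ versus $|\gamma+\delta|$ compares $|f(1)|$ with $1$; from these, together with the multiplier $\alpha\delta/(\beta\gamma)$ of $\{0,\infty\}$, one must decide whether $f$ admits an invariant round disk (about $0$, or about $\infty$, or an annulus pairing the two) on which $f^{2}$ is a strict contraction.

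\noindent
Then the proof splits. In the ``$>$'' regime I would attempt to exclude every non-empty Fatou component: the magnitude bounds should make all periodic cycles repelling (the three fixed points and $\{0,\infty\}$ directly, then every cycle by a covering/index argument), exclude parabolic cycles, and --- since both critical points then lie in $J(f)$ and recur --- exclude Siegel disks and Herman rings via Sullivan's no-wandering-domains theorem and the Fatou-component classification; this forces $J(f)=\hat{\mathbb{C}}$, i.e.\ fractal-unlike. In the ``$<$'' regime I would instead construct an explicit $f^{2}$-invariant round disk about $0$ on which $f^{2}$ contracts, producing an attracting cycle, a non-empty Fatou component, and hence a Julia set that is a proper subset of $\hat{\mathbb{C}}$; one then upgrades ``proper subset'' to ``non-integer dimension'' by showing $J(f)$ is neither a round circle nor a quasicircle (for instance, because a critical point lies on it or because the immediate basin is not a Jordan domain) and invoking the rigidity theorems of Zdunik together with Bowen's pressure formula to conclude $\dim_H J(f)\notin\{1,2\}$.

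\noindent
The last link in each chain is where I expect the real difficulty, and I suspect it is severe enough that the conjecture is not literally true as stated. The three inequalities involve only the moduli $|\alpha|,|\beta|,|\gamma|,|\delta|$, which are not conjugacy-invariant, whereas the genuine dynamical quantities --- the fixed-point and $2$-cycle multipliers, the critical orbits --- depend on the full complex parameters; nothing in the moduli forces either the clean trapping disk or the uniform repulsion the argument needs, so one should expect chaotic parameters with ``mixed'' inequalities and exceptional integer-dimensional Julia sets (quasicircles, Latt\`es-type maps) sitting inside the ``$>$'' region. A provable theorem will most likely require additional genericity or largeness hypotheses, or restriction to a sub-slice of parameter space; pinning down the exact region and, above all, proving non-integrality of $\dim_H J(f)$ throughout it is where the work concentrates.
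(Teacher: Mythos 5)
There is nothing in the paper to compare your argument against: the statement is explicitly a \emph{conjecture}, and the only support the paper offers is the four parameter choices of Table 1, the plots in Figs.~12--15, and an unreported sample of 500 trajectories. What you have written is therefore not a reconstruction of the paper's reasoning but a proposed research program in one-dimensional complex dynamics, and as a frame it is far more substantive than anything in the paper: extending $f_{\alpha,\beta,\gamma,\delta}$ to a degree-two rational self-map of $\hat{\mathbb{C}}$ with $\{0,\infty\}$ a 2-cycle of multiplier $\alpha\delta/(\beta\gamma)$, identifying ``fractal-like/unlike'' with properties of the Julia set, and aiming at the Fatou--Julia dichotomy plus Zdunik's rigidity and Bowen's formula is the right mathematical setting. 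But it is not a proof, and the gap is not a detail: every load-bearing step --- the dictionary from the three modulus inequalities to fixed-point multipliers and critical orbits, the exclusion of all Fatou components in the ``$>$'' regime, the construction of an invariant contracting disk in the ``$<$'' regime, and the non-integrality of $\dim_H J(f)$ --- is announced rather than carried out, and your own closing paragraph concedes the central obstruction: the hypotheses constrain only $\abs{\alpha},\abs{\beta},\abs{\gamma},\abs{\delta}$, which are not conjugacy invariants and so cannot by themselves pin down the conjugacy class of the dynamics. Until that obstruction is either overcome or circumvented by added hypotheses, the program does not yield the statement.

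Before investing further in either direction of your dictionary, note a concrete inconsistency in the source: the conjecture as stated is contradicted by the paper's own Table 1. The two rows labelled ``Fractal-like'' (rows 2 and 3) satisfy $\abs{\alpha}>\abs{\gamma}$, $\abs{\beta}>\abs{\delta}$, $\abs{\alpha+\beta}>\abs{\gamma+\delta}$, and the two rows labelled ``Fractal-unlike'' (rows 1 and 4) satisfy the reversed inequalities --- exactly opposite to the assignment in the conjecture and in the sentence of prose immediately preceding it. So the only data offered in support of the statement in fact refutes it in the form you are attempting to prove; at best the intended conjecture has the two sets of conditions interchanged. There is also a definitional mismatch: the paper's ``fractal-like/unlike'' refers to the visual geometry and computed box-counting dimension of the plotted orbit closure, which need not coincide with your reading fractal-unlike $\equiv J(f)=\hat{\mathbb{C}}$ (a chaotic orbit accumulating on a proper fractal subset would be classified differently under the two conventions). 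Any honest treatment must first fix the definition and the direction of the implication, and only then confront the non-invariance problem you have already correctly identified; as matters stand, neither the paper nor your proposal establishes the statement.
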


\section{A Few Special Cases of Parameters}
Here we shall explore a few case study by considering some restrictions on the parameters $\alpha$, $\beta$, $\gamma$ and $\delta$ as follows. \\

\subsection{$\alpha=\beta, \gamma=\delta$}
Consider $\alpha=\beta$ and $\gamma=\delta$ in the dynamical system Eq.(\ref{equation:total-equationB}) and then it becomes

\begin{equation}
\displaystyle{z_{n+1}=f_{\alpha,\gamma}(z_n)=\frac{\alpha (z_n + 1)}{\gamma (z_n^2 + z_n)}}=\frac{\alpha}{\gamma z_n}
\label{equation:total-equationC}
\end{equation}%
\noindent
The Eq.(\ref{equation:total-equationC}) has two fixed points $z_{\pm}^*=$ $\pm \sqrt{\frac{\alpha}{\gamma}}$ and both of which are neither stable nor unstable since the $\abs{f'(z_{\pm}^*)}=1$.\\
\noindent
The Eq.(\ref{equation:total-equationC}) has periodic solutions of period $2$ only and they are $\phi= \frac{\alpha}{\gamma \psi}$ and $\psi=\frac{\alpha}{\gamma \phi}$. It is noted that there is no nontrivial periodic points of higher periods. For $50$ different initial values the trajectories are plotted in the following Fig. $16$.

\begin{figure}[H]
      \centering

      \resizebox{13cm}{!}
      {
      \begin{tabular}{c }
      \includegraphics [scale=7]{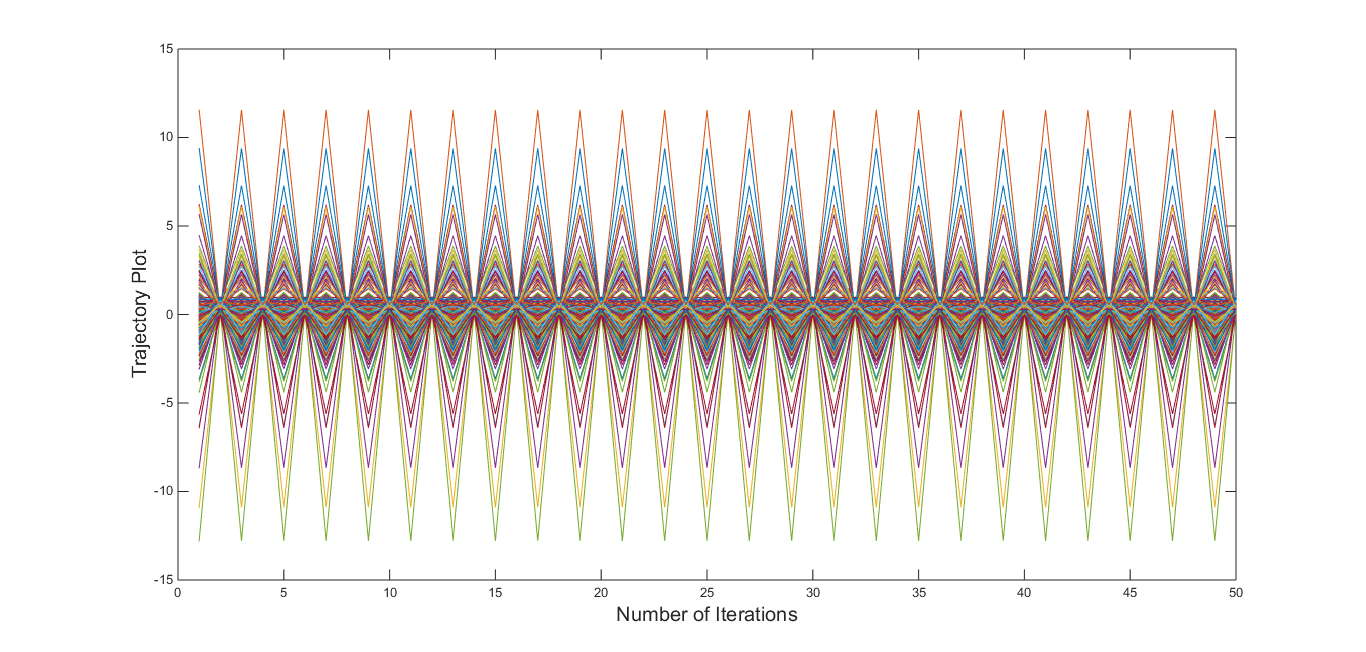}
      \end{tabular}
      }
\caption{50 Period 2 Trajectories of the equation Eq.(\ref{equation:total-equationC}).}
      \begin{center}

      \end{center}
      \end{figure}
\noindent

\subsection{$\alpha=-\beta, \gamma=\delta$}
Consider $\alpha=-\beta$ and $\gamma=\delta$ in the dynamical system Eq.(\ref{equation:total-equationB}) and then it becomes

\begin{equation}
\displaystyle{z_{n+1}=f_{\alpha,\gamma}(z_n)=\frac{\alpha (z_n - 1)}{\gamma (z_n^2 + z_n)}}
\label{equation:total-equationD}
\end{equation}%

\noindent
For arbitrary any initial value, the trajectory is convergent and converge to $\infty$ which is a member of the extended complex plane $\mathbb{C}^*$ which is shown in the Fig.$17$.

\begin{figure}[H]
      \centering

      \resizebox{14cm}{!}
      {
      \begin{tabular}{c c}
      \includegraphics [scale=7]{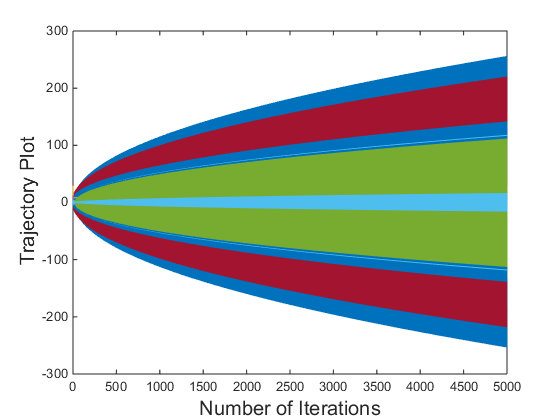}
      \includegraphics [scale=7]{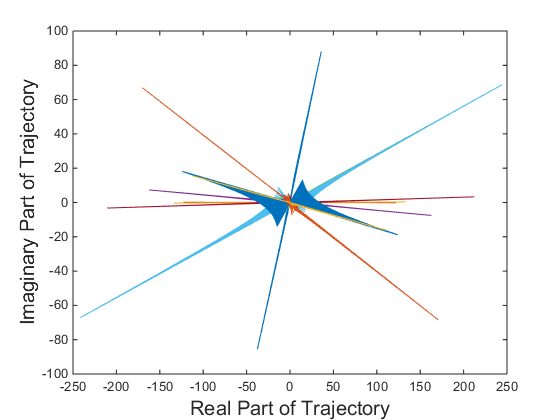}
\end{tabular}
      }
\caption{Trajectory plots of the equation Eq.(\ref{equation:total-equationD}).}
      \begin{center}

      \end{center}
      \end{figure}
\noindent

\subsection{$\gamma=\alpha, \delta=\beta$}
Consider $\gamma=\alpha$ and $\delta=\beta$ in the dynamical system Eq.(\ref{equation:total-equationB}) and then it becomes

\begin{equation}
\displaystyle{z_{n+1}=f_{\alpha,\beta}(z_n)=\frac{\alpha z_n +\beta}{\alpha z_n^2 + \beta z_n }}
\label{equation:total-equationE}
\end{equation}%

\noindent
The Eq.(\ref{equation:total-equationE}) has two fixed points $z_{\pm}^*=$ $\pm 1$ and both of which is neither stable nor unstable since the $\abs{f'(z_{\pm}^*)}=1$ but the Eq.(\ref{equation:total-equationE}) possess the period $2$ solution which are reciprocal to each other. For any initial values (except $-\frac{\beta}{\alpha}$) all solutions of the Eq.(\ref{equation:total-equationE}) are convergent and converge to the period two solution.

\subsection{$\gamma=\beta, \delta=\alpha$}

Consider $\gamma=\beta$ and $\delta=\alpha$ in the dynamical system Eq.(\ref{equation:total-equationB}) and then it becomes

\begin{equation}
\displaystyle{z_{n+1}=f_{\alpha,\beta}(z_n)=\frac{\alpha z_n +\beta}{\beta z_n^2 + \alpha z_n }}
\label{equation:total-equationF}
\end{equation}%

\noindent
The Eq.(\ref{equation:total-equationF}) has three fixed points $z^*=1$, $z^*=\frac{-0.5 \alpha -0.5 \beta -0.5 \sqrt{\alpha ^2+2 \alpha  \beta -3\beta ^2}}{\beta }$ and $z^*=\frac{0.5 \left(- \alpha -\beta +\sqrt{\alpha ^2+2\alpha  \beta -3\beta ^2}\right)}{\beta }$.The local asymptotically stability of the fixed point $1$ depends on $\abs{f'(z^*)}=\frac{2 \beta }{\alpha +\beta }$. So it turns out that if $2\abs{\beta}<\abs{\alpha+\beta}<\abs{\alpha}+\abs{\beta}$ i.e. $\abs{\beta}<\abs{\alpha}$ then the fixed point $1$ is a sink and otherwise it is a source. \\
If $\abs{-2-\frac{\alpha }{\beta }}<1$ i.e. $\abs{\alpha+2\beta}<\abs{\beta}$ then the fixed points $z^*=\frac{-0.5 \alpha -0.5 \beta -0.5 \sqrt{\alpha ^2+2 \alpha  \beta -3\beta ^2}}{\beta }$ and $z^*=\frac{0.5 \left(- \alpha -\beta +\sqrt{\alpha ^2+2\alpha  \beta -3\beta ^2}\right)}{\beta }$ would be a sink. \\
\noindent
Beside the regular dynamics the Eq.(\ref{equation:total-equationF}) possess chaotic solutions. Here is an example of chaotic solutions for $10$ different initial values. Here $\alpha=\delta=28+68i$ and $\beta=\gamma=66+17i$ and $\abs{\alpha}=73.5391$, $\abs{\beta}=68.1542$ and $\abs{\alpha+\beta}=\abs{\gamma+\delta}=126.7320$ i.e. $\abs{\alpha}>\abs{\gamma}$ $\abs{\beta}<\abs{\delta}$ and $\abs{\alpha+\beta}=\abs{\gamma+\delta}$. The chaotic solution is plotted in the Fig. $18$.

\begin{figure}[H]
      \centering

      \resizebox{14cm}{!}
      {
      \begin{tabular}{c c}
      \includegraphics [scale=7]{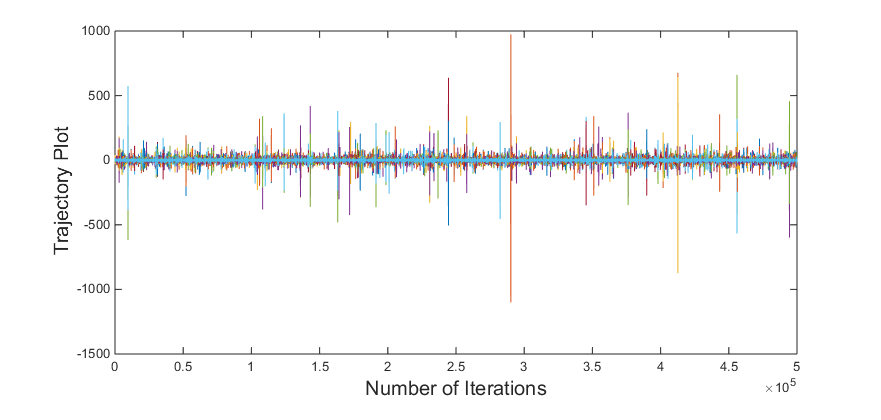}
      \includegraphics [scale=7]{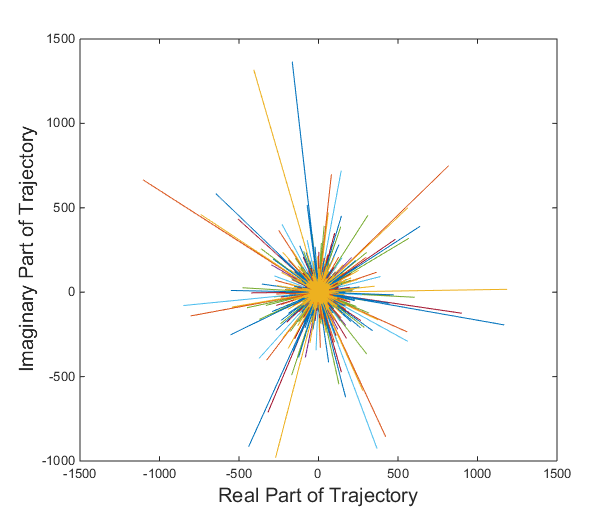}
\end{tabular}
      }
\caption{Chaotic Trajectory plots of the equation Eq.(\ref{equation:total-equationF}).}
      \begin{center}

      \end{center}
      \end{figure}

\section{Future Endeavours}

It is seen that the dynamical system Eq.(\ref{equation:total-equationB}) exhibits a phenomenal jump in terms of dynamical behaviour from real line to complex plane. In continuation of the present work the study of the dynamical system with time delay $l$, ${z_{n+1}=\frac{\alpha_n z_{n-l}+\beta_n}{\gamma_n z_{n-l}^2 + \delta_n z_{n-l}}}$ where $\alpha_n$, $\beta_n$, $\gamma_n$ and $\delta_n$ are all convergent sequence of complex numbers and converges to $\alpha$, $\beta$, $\gamma$ and $\delta$ respectively would indeed be an interesting problem and which we would like to pursue further.



\end{document}